\numberwithin{equation}{section}
\newcommand{\ban}{\begin{eqnarray*}}
\newcommand{\ean}{\end{eqnarray*}}
\newcommand{\ra}{\rightarrow}
\newcommand{\be}{\begin{equation}}
\newcommand{\ee}{\end{equation}}
\newcommand{\ba}{\begin{eqnarray}}
\newcommand{\ea}{\end{eqnarray}}
 \newcommand{\lp}{\langle}
 \newcommand{\rp}{\rangle}
\newtheorem{thm}{Theorem}[section]
\newcommand{\bt}{\begin{thm}}
\newcommand{\et}{\end{thm}}
\newtheorem{conj}[thm]{Conjecture}
\newtheorem{cor}[thm]{Corollary}   
\newcommand{\bc}{\begin{cor}}
\newcommand{\ec}{\end{cor}}
\newtheorem{lem}[thm]{Lemma}   
\newcommand{\bl}{\begin{lem}}
\newcommand{\el}{\end{lem}}
\newtheorem{prop}[thm]{Proposition}
\newcommand{\bp}{\begin{prop}}
\newcommand{\ep}{\end{prop}}
\newtheorem{defn}[thm]{Definition}
\newcommand{\bd}{\begin{defn}}    
\newcommand{\ed}{\end{defn}}
\newtheorem{rmrk}[thm]{Remark}   
\newcommand{\br}{\begin{rmrk}}
\newcommand{\er}{\end{rmrk}}
\newcommand{\thmref}[1]{Theorem~\ref{#1}}
\newcommand{\secref}[1]{Section~\ref{#1}}
\newcommand{\lemref}[1]{Lemma~\ref{#1}}
\newcommand{\defref}[1]{Definition~\ref{#1}}
\newcommand{\R}{\mathbb{R}}
\newcommand{\vol}{{\rm vol}}
\newcommand{\Sc}{{\rm Sc}}
\newcommand{\Ric}{{\operatorname{Ric}}}
\newcommand{\Hess}{{\rm Hess}}
\begin{document}

\title[Singular metrics with nonnegative scalar curvature and RCD]{Singular metrics with nonnegative scalar curvature \\
and RCD}

\author{Xianzhe Dai}
\address{
Department of Mathematics,
University of California, Santa Barbara
CA93106, USA}
\email{dai@math.ucsb.edu}
\thanks{XD was partially supported by Simons Foundation}

\author{Changliang Wang}
\address{
School of Mathematical Sciences and Institute for Advanced Study, Key Laboratory of Intelligent Computing and Applications(Ministry of Education), Tongji University, Shanghai 200092, China}
\email{wangchl@tongji.edu.cn}
\thanks{CW was partially supported by the Fundamental Research Funds for the Central Universities and Shanghai Pilot Program for Basic Research.}

\author{Lihe Wang}
\address{
Department of Mathematics, The University of Iowa,  Iowa City, IA 52242-1419 USA
}
\email{lihe-wang@uiowa.edu}

\author{Guofang Wei}
\address{
Department of Mathematics,
University of California, Santa Barbara
CA93106, USA
}
\thanks{GW was partially supported by NSF DMS grant  2403557}
\email{wei@math.ucsb.edu}

\date{}

\keywords{}

\begin{abstract}
We show that a uniformly Euclidean metric with isolated singularity on $M^n = T^n \# M_0$, where $4\leq n\leq 7$ or $n\geq 4$, $M_0$ spin, and nonnegative scalar curvature on the smooth part is Ricci flat and extends smoothly over the singularity. This confirms Schoen's Conjecture in these cases. The key to the proof is to show that the space has nonnegative synthetic Ricci curvature, i.e.,  an $RCD(0, n)$ space. Our result also holds when the singular set consists of a finite union of submanifolds (of possibly different dimensions) intersecting transversally under additional assumption on the co-dimension and the location of the singular set.
\end{abstract}

\maketitle

\tableofcontents

\section{Introduction}
The existence problem of Riemannian metrics with positive scalar curvature is a fundamental topic in Riemannian geometry. In the smooth setting, it has been well studied and many important results have been established.  Kazdan-Warner \cite{KW-JDG-75} and Schoen \cite{Schoen-89} proved that on a closed manifold, there exists a smooth metric with positive scalar curvature iff its Yamabe constant (aka $\sigma$-constant or Schoen constant) is positive. Moreover, their rigidity result says that on a closed manifold with a nonpositive Yamabe constant, any metric with nonnegative scalar curvature must be Ricci flat. 
Recall that, for a closed manifold $M$, its Yamabe constant $\sigma(M)$ is defined as
\begin{equation*}
\sigma(M) := \sup \{Y(M, [g_0]) \mid [g_0] \text{ is a conformal class of smooth metrics on } M \},
\end{equation*}
where 
\begin{equation*}
Y(M, [g_0]) := \inf \left\{\int_{M} \Sc_{g} d\vol_{g} \mid g \in [g_0], \ \ {\rm Vol}_g(M) =1\right\}
\end{equation*}
and $\Sc_g$ denotes the scalar curvature of a smooth metric $g$. The Yamabe constant $\sigma(M)$ is a diffeomorphism invariant of $M$. 
For example, for a torus $T^n$, $\sigma(T^n) =0$ as a consequence of the Geroch conjecture established by Schoen-Yau \cite{SY-MM-79} and Gromov-Lawson \cite{GL-Annals-80}. 

For simply connected manifolds of dimension $\ge 5$, the problem of what closed manifolds should admit metrics of
positive scalar curvature is
fully understood in terms of the second Stiefel–Whitney class (which determins whether the manifold is spin or not) and $\alpha$-invariant \cite{Gromov-Lawson1980-simply, Stolz1992}.

Motivated by general relativity, the Geroch conjecture says that there is no smooth metric of positive scalar curvature on $T^n$. This was first proved by Schoen-Yau \cite{SY-MM-79} in dimensions $ n \leq 7 $ using the minimal surface method, and later by Gromov-Lawson \cite{GL-Annals-80, GL-IHESP-83} in arbitrary dimensions using the Dirac operator method. More generally, Gromov-Lawson\cite{GL-IHESP-83} proved that a $\Lambda^2$-enlargeable manifold does not admit any complete metric with positive scalar curvature. Moreover, one has the rigidity result which says that the metric with nonnegative scalar curvature on such manifolds must be flat. In particular, for any closed spin manifold $M^n$, $T^n \# M$ is $\Lambda^2$-enlargeable, and as a result, $\sigma(T^n \# M) \leq 0$.  Recently, these results have been generalized to allow $M$ to be non-compact by Chodosh-Li \cite{Chodosh-Li-Annals} for dimensions $\leq 7$, Wang-Zhang \cite{Wang-Zhang-22} for spin manifolds of arbitrary dimensions, and also Lesourd-Unger-Yau \cite{LUY-JDG} in dimension three,

Inspired by the investigation of weak notions of nonnegative scalar curvature, e.g., Gromov's polyhedral comparison theory \cite{Gromov14}, and the positive mass theorem for singular metrics, it is natural to study the forementioned Kazdan-Warner result and Geroch conjecture for singular metrics. A natural class of such singular metrics is that of uniformly Euclidean metrics. 
Following Li-Mantoulidis \cite{Li-Mantoulidis}, 
we say a measurable symmetric $2$-tensor $g$ on a closed manifold $M$ to be a {\em uniformly Euclidean metric} (also called $L^{\infty}$ metric), if 
\begin{equation}\label{eqn-uniformly-Euclidean-metric}
\Lambda^{-1} \bar{g} \leq g \leq \Lambda \bar{g}
\end{equation}
holds a.e. on $M$ for some smooth metric $\bar{g}$ on $M$ and constant $\Lambda > 0$. A point is a smooth point for $g$ if there is an open neighborhood in which $g$ is smooth. Otherwise, we will call it a singular point. The singular set is the collection of singular point; it is closed by definition as its complement, called the smooth part of $g$, is an open set.
We say $g$ has positive (nonnegative resp.) scalar curvature if its scalar curvature on the smooth part is positive (nonnegative resp.).

There are two basic problems in this direction. First of all, just like the smooth case, one has the existence/non-existence problem of singular uniformly Euclidean metrics with positive scalar curvature on a closed manifold with nonpositive Yamabe constant. Secondly, related to the rigidity part, a new challenge here is the smooth extension/removable singularity problem of uniformly Euclidean metrics with nonnegative scalar curvature. Note that once the metric can be smoothly extended over the singular set, the rigidity result directly follows from the Kazdan-Warner result. For uniformly Euclidean metrics with conical singularities along a submanifold of codimension 2, Li-Mantoulidis \cite{Li-Mantoulidis} obtained an affirmative answer to both problems, provided the angle of the cone $\leq 2\pi$, while they also provided counterexample when the angle of the cone $> 2\pi$. In the case of singular set having codimension $\geq 3$, Schoen has the following conjecture.
\begin{conj}[ Conjecture 1.5 in \cite{Li-Mantoulidis}]\label{conj-Schoen}
{\rm
Suppose $M$ is a closed smooth manifold whose Yamabe constant is nonpositive, and $g$ is a uniformly Euclidean metric on $M$ that is smooth away from a closed embedded submanifold $S \subset M$ with codimension $ \geq 3$. Then $g$ extends smoothly to $M$, and hence is Ricci flat. In other words, 
\begin{eqnarray*}
& & \Sc_g \geq 0 \ \  \text{on} \ \ M \setminus S,  \ \ \ \sigma(M) \leq 0, \ \text{and}\ co\dim(S) \geq 3  \\
& \Longrightarrow & g \ \ \text{extends smoothly to } \ \ M \ \ \text{and} \ \ \Ric_g \equiv 0.
\end{eqnarray*}
}
\end{conj}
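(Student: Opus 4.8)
Write $M^{\circ}=M\setminus S$ for the smooth part, on which $g$ is a smooth metric by the definition of a smooth point, and recall that, $S$ having codimension $\geq 3$ hence zero $2$-capacity, one has $W^{1,2}_{0}(M^{\circ})=W^{1,2}(M)$; this is the one structural fact that lets the smooth theory be transplanted to the singular metric. The plan is to reduce the conjecture to two independent statements. \textbf{(A) Rigidity on the smooth part:} $\Sc_{g}\geq 0$ on $M^{\circ}$ together with $\sigma(M)\leq 0$ forces $\Ric_{g}\equiv 0$ on $M^{\circ}$. \textbf{(B) Removable singularity:} a uniformly Euclidean metric that is smooth and Ricci flat on $M^{\circ}$ extends smoothly across $S$. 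Granting (A) and (B), $g$ extends smoothly to $M$ with $\Ric_{g}\equiv 0$, which is the assertion (and is of course consistent with the smooth Kazdan--Warner/Schoen rigidity for $M$).

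For (A), the conformal method of Kazdan--Warner and Schoen transplants to $(M,g)$ because, $S$ being $2$-polar, the conformal Laplacian $L_{g}=-\tfrac{4(n-1)}{n-2}\Delta_{g}+\Sc_{g}$ of $M^{\circ}$ is governed by the Sobolev space $W^{1,2}(M)$. First, if $\Sc_{g}\not\equiv 0$ on $M^{\circ}$, then solving the associated Yamabe-type equation in $W^{1,2}(M)$ conformally deforms $g$ to a uniformly Euclidean metric with positive scalar curvature on $M^{\circ}$, which a mollification across the codimension-$\geq 3$ set (in the spirit of Li--Mantoulidis) converts into a genuine smooth metric of positive scalar curvature on $M$, contradicting $\sigma(M)\leq 0$. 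Hence $\Sc_{g}\equiv 0$ on $M^{\circ}$. Second, if $\Ric_{g}\not\equiv 0$ on $M^{\circ}$, a Bourguignon--Kazdan--Warner perturbation supported in a small ball of $M^{\circ}$ about a point where $\Ric_{g}\neq 0$ yields a metric that agrees with $g$ near $S$, is therefore still uniformly Euclidean, and has scalar curvature $\geq 0$ and $\not\equiv 0$ on $M^{\circ}$; the first step applies again, a contradiction. Thus $\Ric_{g}\equiv 0$ on $M^{\circ}$, and $g$ is a smooth Ricci-flat metric there.

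For (B), form the metric measure space $\big(M,d_{g},\mathcal{H}^{n}_{g}\big)$, where $d_{g}$ is the length distance of $g$ (well defined by \eqref{eqn-uniformly-Euclidean-metric}) and $\mathcal{H}^{n}_{g}$ its $n$-dimensional Hausdorff measure; this measure is finite, coincides with $d\vol_{g}$ on $M^{\circ}$, and gives $S$ zero mass since $\dim_{\mathcal{H}}S\leq n-3$. Because $S$ is $2$-polar, the Cheeger energy of this space is the Dirichlet form $\varphi\mapsto\int_{M^{\circ}}|\nabla\varphi|_{g}^{2}\,d\vol_{g}$ on $W^{1,2}(M^{\circ})=W^{1,2}(M)$; the Bakry--\'Emery condition $BE(0,n)$ holds on $M^{\circ}$ because $\Ric_{g}\geq 0$ there, and it extends across $S$ by a cutoff argument in which test functions concentrating near $S$ contribute nothing, again by $2$-polarity. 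Hence $\big(M,d_{g},\mathcal{H}^{n}_{g}\big)$ is a non-collapsed $RCD(0,n)$ space. One then identifies its regular set: every point of $M^{\circ}$ is regular since the metric is smooth there, and --- using the non-collapsed structure theory of De Philippis--Gigli, the fact that $S$ is a submanifold of codimension $\geq 3$, and the two-sided bound \eqref{eqn-uniformly-Euclidean-metric} (which makes every tangent cone bi-Lipschitz to $\R^{n}$ with a fixed constant and lets the cone split off the Euclidean directions along $S$) --- one shows that no point of $S$ is singular: the transverse factor would be a Ricci-flat cone of dimension $\geq 3$ that is bi-Lipschitz Euclidean and uniformly controlled, which a cone-rigidity argument forces to be flat. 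Hence the space is everywhere regular, so by $\varepsilon$-regularity for non-collapsed $RCD$ spaces it is a $C^{1,\alpha}$ Riemannian manifold whose metric restricts to $g$ on $M^{\circ}$; in harmonic coordinates the equation $\Ric=0$ is elliptic with H\"older coefficients, so the extension is $C^{\infty}$, and $\Ric_{g}\equiv 0$ on $M$ by density of $M^{\circ}$.

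The main obstacle is that step (A), for an arbitrary closed $M$ subject only to $\sigma(M)\leq 0$, rests on the statement that a uniformly Euclidean metric with positive scalar curvature on the complement of a codimension-$\geq 3$ singular set can be mollified to a genuinely smooth metric of positive scalar curvature. This is established for conical singularities along codimension-$2$ submanifolds by Li--Mantoulidis, where the cone angle is the decisive parameter, but its extension to arbitrary uniformly Euclidean singularities of codimension $\geq 3$ --- controlling the conformal factor and the mollification near $S$ so that no scalar-curvature mass is lost and no curvature concentrates along $S$ --- is the genuinely hard analytic input, and it is precisely its absence that forces one, in practice, to restrict to manifolds such as $M=T^{n}\#M_{0}$ (with $n=6,7$, or $n\geq 6$ and $M_{0}$ spin), where the enlargeability/Dirac-operator method, with Hardy-type inequalities absorbing the contributions near the polar set, furnishes an alternative route to $\Ric_{g}\equiv 0$ on $M^{\circ}$. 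The secondary difficulties --- that $BE(0,n)$ extends across the polar set $S$ with no correction term, and that the candidate singular tangent cones along $S$ are ruled out using only \eqref{eqn-uniformly-Euclidean-metric} and the submanifold structure, with no a priori curvature bound near $S$ --- are where the $RCD$ framework carries the load, and their resolution is the technical heart of step (B).
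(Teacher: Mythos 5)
The statement you are proving is Schoen's Conjecture in full generality, and as the paper notes immediately after stating it, Cecchini--Frenck--Zeidler have constructed counterexamples on simply connected closed manifolds of dimension $\geq 8$. The conjecture is therefore false as stated; the paper proves only a restricted version, \thmref{thm-smooth-extension}, for $M=T^n\#M_0$ with singular set of codimension $\geq \max(6,\,n/2+1)$ satisfying the placement condition 2). You flag, correctly, that your step (A) --- extracting $\Ric_g\equiv 0$ on $M\setminus S$ from $\Sc_g\geq 0$ and $\sigma(M)\leq 0$ alone --- is not available in this generality and is precisely what forces the restriction to $T^n\#M_0$ (where the conformal blow-up reduces to Chodosh--Li and Wang--Zhang). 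What you do not flag is that your step (B) fails as a stand-alone local statement as well.

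Your step (B) hinges on the claim that a tangent cone at a point of $S$ would be a Ricci-flat cone bi-Lipschitz to Euclidean space, and that ``a cone-rigidity argument forces [it] to be flat.'' This is false. For $n\geq 6$, B\"ohm constructed non-round Einstein metrics on $S^{n-1}$ with $\Ric=(n-2)g$; the metric cone over any such link is Ricci-flat, non-flat, and bi-Lipschitz to $\R^n$ with a fixed constant, because any smooth metric on a compact sphere is uniformly comparable to the round one. So the uniformly Euclidean hypothesis \eqref{eqn-uniformly-Euclidean-metric} gives no local obstruction to a B\"ohm-type cone singularity, and no purely local tangent-cone argument can remove it. The paper's removable-singularity mechanism is fundamentally global: after establishing that $(M,d_g,\mu_g)$ is ${\rm RCD}(0,n)$ across $S$ (\thmref{thm-RCD}), the flatness is forced by Mondino--Wei's torus rigidity for ${\rm RCD}(0,n)$ spaces in the proof of \thmref{thm-smooth-extension}, and by Gigli's splitting theorem applied to the universal cover in \thmref{thm-removable}. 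It is these global rigidity theorems, fed by the large fundamental group, that exclude the non-flat cone --- and that topological input is exactly what your step (B) is missing.
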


Li-Mantoulidis \cite{Li-Mantoulidis} proved this conjecture for $3$-dimensional $M$. They used a removal singularity result of Smith-Yang \cite{Smith-Yang} for $3$-dimensional Einstein metrics. On the other hand, very recently,
Cecchini-Frenck-Zeidler \cite{Cecchini2024} constructed counterexamples to Conjecture \ref{conj-Schoen} on simply connected closed manifold of dimension $\geq 8$. In contrast, the conjecture could still be true on manifolds with certain topology. Dai-Sun-Wang \cite{DSW-PSC-conical} proved Conjecture \ref{conj-Schoen} for metrics with isolated conical singularities on $T^n \# M_0$, including a smooth extension and rigidity result, provided that $3 \leq n \leq 7$ or $M_0$ is spin. On $T^n \# M_0$, the non-existence problem of general uniformly Euclidean metrics with the singular set having co-dimension $\geq \frac{n}{2} +1$ can be answered affirmatively in a similar way. Kazaras \cite{Kazaras-MAAN-19} proved that, on a four-dimensional enlargeable closed smooth manifold, a uniformly Euclidean
metric g that is smooth away from a smoothly embedded finite 1-complex and has nonnegative
scalar curvature on the smooth part must be Ricci flat on the smooth part. 
Cecchini-Frenck-Zeidler \cite{Cecchini2024} proved that a uniformly Euclidean metric on a closed spin manifold with the fundamental group satisfying some condition on Rosenberg index must be Ricci flat on the smooth part, provided that the singular set consisting of finitely many points and the scalar curvature is nonnegative on the smooth part.  Wang-Xie \cite{WX2024} also proved a similar result on manifolds admitting a Lipschitz map to $T^n$ with nonzero degree and allowing the singular set having codim $\geq \frac{n}{2}+1$. Shi-Wang-Wu-Zhu \cite{SWWZ-MAAN-21} defined open Schoen-Yau-Schick manifolds and proved that there is no complete metric with positive scalar curvature on such manifolds and consequently proved that a uniformly Euclidean metric with nonnegative scalar curvature and a singular set of codimension $\geq \frac{n}{2}+1$ on a closed Schoen-Yau-Schick manifold must be Ricci flat on the smooth part. Kazaras \cite{Kazaras-MAAN-19}, Cecchini-Frenck-Zeidler \cite{Cecchini2024}, Wang-Xie \cite{WX2024} and Shi-Wang-Wu-Zhu \cite{SWWZ2024} obtained the Ricci flatness of the metrics on the smooth part, but did not establish a smooth extension result. 

The main result of this paper is a partial affirmative solution of Conjecture \ref{conj-Schoen} for manifolds which are connected sums with the torus.

\begin{thm}\label{thm-smooth-extension}
	Let $M^n = T^n \# M_0$, $n \geq 4$, be a connected closed manifold. Assume either $4 \leq n\leq 7$ or $M_0$ is spin. Suppose \\
1). $g$ is a uniformly Euclidean metric on $M$ that is smooth away from a closed, embedded submanifold $S$ with codimension
		$\geq \max (4, \frac{n}{2}+1)$. \\
2). There is a $(n-1)$-sphere dividing $M=T^n\# M_0$ into the $T^n$-part and $M_0$-part, such that each component of $S$ lies either in the $T^n$-part or $M_0$-part. Further, if a component of $S$ lies in the torus part, we assume that it is contained in a topological ball. \\
3). The scalar curvature $\Sc_g \geq 0$ on $M\setminus S$.\\ 
	 Then $g$ extends smoothly to $M$ (possibly with a change of differential structure)  and $(M^n, g)$ is isometric to a flat torus.
\end{thm}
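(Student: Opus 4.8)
\emph{Overview and Step 1 (Ricci-flatness on the smooth part).} The plan is to realize $(M,d_g)$ --- with the length distance of $g$ --- as a metric measure space of synthetic nonnegative Ricci curvature and maximal first Betti number, and then invoke the rigidity of such spaces. The first step is to show $\Ric_g\equiv 0$ on $M\setminus S$. Collapsing the $M_0$-summand gives a degree-one map $T^n\#M_0\to T^n$, so $M$ is enlargeable, and $M$ is spin whenever $M_0$ is. One then runs the Schoen--Yau minimal-hypersurface descent for $6\le n\le 7$ (where area minimizers are regular), respectively the Gromov--Lawson Dirac/index argument when $M_0$ is spin, arranging the minimal slices (resp.\ the index problem) to live in the smooth part $M\setminus S$; the codimension bound $\operatorname{codim}S\ge\tfrac n2+1$ and the position hypothesis (2) are exactly what allow this. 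This is the Ricci-flatness on the smooth part already obtained by Shi--Wang--Wu--Zhu, Wang--Xie and Cecchini--Frenck--Zeidler, so I expect Step 1 to go through by their methods; in particular $g$ is real-analytic and Ricci flat on the open dense set $M\setminus S$.

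\emph{Step 2 ($(M,d_g,\vol_g)$ is $RCD(0,n)$).} Since $g$ is uniformly Euclidean, $d_g$ is bi-Lipschitz to a fixed smooth distance, so $(M,d_g)$ is a compact geodesic space, the volume measure $\vol_g$ of the smooth part (which equals $\mathcal H^n_{d_g}$) has bounded density, and the space is Ahlfors $n$-regular, hence non-collapsed. Because $\operatorname{codim}S\ge 6>2$, the set $S$ has zero $W^{1,2}$-capacity, so the Cheeger energy of $(M,d_g,\vol_g)$ coincides with the Dirichlet energy of the smooth Ricci-flat structure on $M\setminus S$; in particular the space is infinitesimally Hilbertian and essentially non-branching. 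On $M\setminus S$ the Bochner inequality holds pointwise with constant $0$ and dimension $n$ because $\Ric_g=0$; the point is then to upgrade it to the weak Bakry--\'Emery inequality $BE(0,n)$ on all of $M$. Here the strong codimension hypothesis is essential: together with the uniform comparison of $g$ to a smooth metric it forces the tube volumes around $S$ to decay like $\epsilon^{\operatorname{codim}S}$, which kills the boundary terms in the integration by parts across $S$. With $BE(0,n)$ in hand, the Ambrosio--Gigli--Savar\'e / Erbar--Kuwada--Sturm equivalence gives that $(M,d_g,\vol_g)$ is an $RCD(0,n)$ space.

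\emph{Step 3 (rigidity and smooth extension).} As a connected sum, $b_1(M)=n+b_1(M_0)\ge n$. Since $(M,d_g,\vol_g)$ is $RCD(0,n)$, the synthetic Bochner theorem gives $b_1\le n$, so $b_1=n$; the equality (maximal Betti number) rigidity for $RCD(0,N)$ spaces then forces $(M,d_g,\vol_g)$ to be isometric to a flat torus $\R^n/\Lambda$ with Lebesgue measure (and, incidentally, $b_1(M_0)=0$). Finally, any isometry $\phi\colon(M,d_g)\to(\R^n/\Lambda,d_{\mathrm{eucl}})$ restricts on the open dense set $M\setminus S$ to a distance isometry into the flat torus, hence (Myers--Steenrod) to a smooth Riemannian isometry there; pulling back flat coordinate charts through $\phi$ yields a smooth atlas on the whole of $M$ in which $g$ is the flat metric. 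Thus $g$ extends smoothly across $S$ and $(M,g)$ is isometric to a flat torus; the resulting smooth structure need not be the original one on $T^n\#M_0$ (equivalently, $M_0$ is forced to be a homotopy sphere), which is the claimed change of differentiable structure.

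\emph{Main obstacle.} The heart of the argument is Step 2 --- ruling out that $S$ secretly carries negative synthetic Ricci curvature. In codimension two this genuinely fails (the Li--Mantoulidis cone-angle $>2\pi$ examples), so the proof must use both the uniform Euclidean bound and the strong codimension assumption in an essential way, in order to control the geometry in a neighborhood of $S$ and to justify the weak Bochner inequality across it; establishing this rigorously (rather than the smooth-part Ricci-flatness of Step 1, which is known, or the $RCD$ rigidity of Step 3, which is a toolkit application) is where the real work lies.
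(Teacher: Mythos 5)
Your three-step architecture is exactly the paper's: (i) Ricci-flatness on $M\setminus S$ via the known non-existence results for complete positive-scalar-curvature metrics on $(T^n\#M_0)\setminus S$ (the paper gets there by conformally blowing up with a Green's function and quoting Chodosh--Li / Wang--Zhang, then Kazdan's Theorem B; hypothesis (1) with $\operatorname{codim}S\ge \tfrac n2+1$ ensures completeness of the blow-up and hypothesis (2) ensures $M\setminus S$ is again $T^n\#M_1$); (ii) upgrade to $RCD(0,n)$; (iii) conclude via the maximal first Betti number rigidity for $RCD(0,N)$ spaces (Mondino--Wei), which is precisely your Step 3. One caveat on your Step 1: you propose to run the minimal-slice/Dirac arguments directly in the singular manifold, ``arranging'' the slices to avoid $S$; there is no mechanism for forcing a minimizer away from $S$, and the actual route is the conformal blow-up, which pushes $S$ to infinity.

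The genuine gap is in Step 2, and it is not merely a matter of writing out details you have correctly identified. Your claim that ``the codimension hypothesis forces the tube volumes around $S$ to decay like $\epsilon^{\operatorname{codim}S}$, which kills the boundary terms'' is not sufficient. The cutoff argument for the weak Bochner inequality is run on finite linear combinations $f_N$ of eigenfunctions, and the error terms one must kill are of the form $\int |\nabla f_N|^2\,|\nabla\psi_\delta|^2\,d\mu_g$ and $\int |\nabla f_N|^4\,|\nabla\psi_\delta|^2\,d\mu_g$ with $|\nabla\psi_\delta|\sim\delta^{-1}$ supported in the tube $\{\delta/2\le r\le\delta\}$. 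Tube-volume decay alone controls these only if $|\nabla f_N|$ is bounded near $S$, which is exactly what fails here (and is the point where this setting differs from the almost-smooth/stratified cases of Honda and Bertrand--Ketterer--Mondello--Richard). The missing ingredient is a quantitative gradient estimate for eigenfunctions near $S$: a De Giorgi--Nash--Moser H\"older bound $[u]_{C^\alpha}\le C\|u\|_{L^2}$ for the uniformly Euclidean operator, combined with a Cheng--Yau-type local gradient estimate on balls $B_s(y)\subset\Omega$ with $s\sim r(y)$ (which is where the Ricci lower bound from Step 1 enters a second time), yielding $|\nabla u|(y)\le C\,r(y)^{\alpha-1}$. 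Only with this blow-up rate do the error terms scale like $\delta^{2\alpha-4+(n-k)}$ and $\delta^{4\alpha-6+(n-k)}$, and it is the second exponent that forces $\operatorname{codim}S\ge 6$ --- a numerology your sketch cannot reproduce, since pure volume decay would suggest $\operatorname{codim}S>2$ suffices, contradicting the role the hypothesis plays in the theorem. (Minor additional point: essential non-branching does not follow from infinitesimal Hilbertianity plus smoothness off a capacity-zero set; in the paper it is a consequence of the $RCD$ condition once established, not an input.)
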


In particular, this confirms Schoen's Conjecture for isolated singularity when $M^n = T^n \# M_0$, $4 \leq n \leq 7$ or $n\geq 4$ and $M_0$ is spin.

\begin{rmrk}
{\rm
 As pointed out in \cite{Cecchini2024}, one may need to change differential structure when extending the metric. This is also the case for other well-known removable singularity results.  Our result also holds for singular set which consists of a finite union of submanifolds (of possibly different dimensions) intersecting transversally. 
 }
\end{rmrk}

\begin{rmrk}
{\rm
One may think of this result as a removable singularity theorem. Unlike the well-known removable singularity results for Einstein metrics or Yang-Mills instantons, our result is not local but rely on the global topological structure.
}
\end{rmrk}

A novel and crucial ingredient in the proof of Theorem \ref{thm-smooth-extension} is the following result, which is of independent interest. It says that a uniformly Euclidean metric on a closed manifold smooth away from a closed singular set of codimension $4$ or higher and with  Ricci curvature bounded below by $K$ on the smooth part is {\rm RCD(K, n)}, i.e., its Ricci curvature is bounded below by $K$ in the weak sense.

\begin{thm}\label{thm-RCD}
Let $M^n$ be a connected closed manifold of dimension $n\geq 4$, and $g$ a uniformly Euclidean metric on $M$ that is smooth away from a closed set   
$S$ with codimension $ 
\geq 4$. If, for some constant $K$, the Ricci curvature $\Ric_g \geq K$ on the smooth part, then $(M^n, g)$ is a {\rm RCD(K, n)} space.
\end{thm}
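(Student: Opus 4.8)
The plan is to verify the RCD$(K,n)$ condition by exhibiting $(M,g)$ as a measured Gromov–Hausdorff limit of smooth Riemannian manifolds with Ricci curvature bounded below by $K-\epsilon$ (more precisely, a limit in the sense that makes the stability of RCD applicable), and then use the stability of the RCD condition under such limits together with a dimension–upper–bound argument. The key point enabling this is the codimension hypothesis $\mathrm{codim}\, S \geq 6$, which forces the singular set to have vanishing capacity in a strong enough sense: it has Hausdorff codimension $>2$, hence $2$-capacity zero, so it does not affect the Sobolev/Dirichlet structure, and moreover it has codimension $> 4$, which (combined with the uniformly Euclidean bound) lets one smooth out the metric in a collar of $S$ without creating concentrated negative Ricci curvature.

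Concretely, I would proceed in the following steps. First, establish the metric-measure preliminaries: the distance $d_g$ induced by $g$ on $M\setminus S$ extends to a genuine geodesic metric on all of $M$ (the singular set is not metrically separating because of the capacity bound / large codimension), and the measure $\vol_g$ is a finite Borel measure comparable to $\vol_{\bar g}$; one checks $(M,d_g,\vol_g)$ is a compact, geodesic, infinitesimally Hilbertian metric measure space, and that the Sobolev space $W^{1,2}(M,d_g,\vol_g)$ coincides with $W^{1,2}(M\setminus S)$ because $S$ has zero capacity — this is where $\mathrm{codim}\,S\geq 3$ is used. Second — the technical heart — construct a family of smooth metrics $g_\delta$ on $M$ (same underlying manifold, or a manifold diffeomorphic to it) with $g_\delta \to g$ uniformly on compact subsets of $M\setminus S$, with $g_\delta$ uniformly Euclidean with a uniform $\Lambda$, and with $\Ric_{g_\delta} \geq K - \epsilon(\delta)$ globally, $\epsilon(\delta)\to 0$. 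The construction mollifies $g$ on an $r$-collar of $S$; the curvature of the mollified metric is controlled by second derivatives of $g$ at scale $r$, which blow up like $r^{-2}$, but the volume of the collar is $\lesssim r^{\mathrm{codim}\,S}$, and the interpolation can be arranged (Miao–type or Schoen–Yau–type gluing, doubling the collar or using a logarithmic cutoff) so that the negative part of $\Ric_{g_\delta}$, while possibly large, is supported on a set of small volume and can be absorbed; here the strict inequality $\mathrm{codim}\,S\geq 6 > 4$ is what gives the needed room. Third, apply the measured Gromov–Hausdorff (or $L^2$-, or pmGH-) stability of the RCD condition: each $(M,d_{g_\delta},\vol_{g_\delta})$ is a smooth $\Ric\geq K-\epsilon(\delta)$, hence CD$(K-\epsilon(\delta),n)$ and RCD$(K-\epsilon(\delta),n)$ space; the spaces converge to $(M,d_g,\vol_g)$; and RCD$(K',N)$ is closed under pmGH convergence, yielding RCD$(K,n)$ in the limit (the dimension bound $N=n$ is preserved). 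Fourth, identify the limit: one must check that the pmGH limit of the $g_\delta$ is indeed $(M,d_g,\vol_g)$ and not some collapsed or otherwise degenerate object — this follows from the uniform two-sided bound $\Lambda^{-1}\bar g \leq g_\delta \leq \Lambda\bar g$, which gives uniform control of distances and forces the limit distance to be $d_g$, and from dominated convergence for the measures.

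The main obstacle I anticipate is Step two: producing the smoothing $g_\delta$ with a \emph{global} lower Ricci bound $\Ric_{g_\delta}\geq K-\epsilon(\delta)$. Naive mollification only controls $|\Ric_{g_\delta}|\lesssim r^{-2}$ on the collar, which is not a lower bound tending to $K$; one genuinely needs a quantitative interpolation lemma showing that the obstruction to an honest lower bound is a quantity integrating against the collar volume $\lesssim r^{\mathrm{codim}\,S-2} \to 0$, so that after a further localized conformal or warped correction (or by passing through the distributional/synthetic characterization directly on $M\setminus S$ via the Bochner inequality with the cutoff functions that exist precisely because $\mathrm{cap}(S)=0$) one recovers RCD$(K,n)$. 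An alternative route that sidesteps the gluing is to verify the Bakry–Émery / Bochner inequality $\tfrac12\Delta|\nabla u|^2 \geq \langle\nabla u,\nabla\Delta u\rangle + \tfrac1n(\Delta u)^2 + K|\nabla u|^2$ weakly on $M$ directly: it holds on $M\setminus S$ by smoothness, and one upgrades it across $S$ using test functions supported away from $S$ together with the vanishing $2$-capacity to pass to general test functions — this is arguably the cleanest path and I would pursue it in parallel, the obstruction there being to show the admissible test functions for the synthetic Bochner inequality are dense, which is again a capacity statement requiring $\mathrm{codim}\,S\geq 3$, with the extra codimension presumably needed to handle the measure-valued Laplacian terms.
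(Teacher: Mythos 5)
Your primary route (Step two: mollify $g$ near $S$ to get smooth metrics with $\Ric_{g_\delta}\ge K-\epsilon(\delta)$, then invoke pmGH stability of RCD) has a genuine gap that you partly acknowledge but do not close, and I do not believe it can be closed by the devices you suggest. The heuristic ``negative Ricci of size $r^{-2}$ on a collar of volume $r^{\operatorname{codim} S}$ can be absorbed'' is an integral statement, but neither the CD nor the BE formulation of a Ricci lower bound is an integral condition that tolerates a small-volume region of very negative curvature: a thin neck of arbitrarily negative Ricci curvature through which geodesics pass destroys the $K$-convexity of the entropy no matter how small its volume. Miao/Schoen--Yau type gluings work for \emph{scalar} curvature across hypersurfaces because the distributional curvature concentrates with a sign controlled by mean curvatures; there is no analogue for Ricci lower bounds across a codimension $\ge 6$ set, and for a metric that is merely $L^\infty$ near $S$ there is no scale at which second derivatives are controlled at all. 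So Step two is not a technical obstacle to be engineered around; it is where the proof would fail.

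Your alternative route --- verifying the weak Bochner/BE$(K,n)$ inequality directly on $M\setminus S$ and upgrading across $S$ with cutoffs --- is the route the paper actually takes (together with the zero-capacity identification $W^{1,2}(M,d_g,\mu_g)=H^{1,2}$, the Sobolev-to-Lipschitz property via perturbing admissible curves off $S$, and $L^2$-compactness giving a discrete spectrum). But you have misidentified the main difficulty there. It is not the density of test functions, which indeed only needs $\operatorname{codim} S\ge 3$; it is that the functions $f$ one must plug into the Bochner inequality are expanded in eigenfunctions of $\Delta_g$, and for an $L^\infty$ metric these eigenfunctions do \emph{not} have bounded gradient near $S$. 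The cutoff argument produces error terms of the form $\int |\nabla f_N|^2|\nabla\psi_\delta|^2\,d\mu_g$ and $\int |\nabla f_N|^4|\nabla\psi_\delta|^2\,d\mu_g$ (the latter arising when one multiplies Bochner by $|\nabla f_N|^2\psi_\delta^2$ to get $|\nabla f_N|^2\in W^{1,2}$), and to kill these one needs a quantitative rate of blow-up of $|\nabla u|$ for eigenfunctions $u$. The paper obtains $|\nabla u|(x)\lesssim r(x)^{\alpha-1}$ by combining a Cheng--Yau gradient estimate on balls $B_{r/2}(x)\subset\Omega$ (where the smooth Ricci lower bound is available) with the De Giorgi--Nash--Moser $C^\alpha$ estimate coming from uniform ellipticity; the hypothesis $\operatorname{codim} S\ge 6$ then enters precisely to make $\delta^{4\alpha-6+(n-k)}\to 0$ in the worst error term. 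Without this gradient estimate and the accompanying codimension arithmetic, the ``cleanest path'' you sketch does not go through.
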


As an interesting application, we obtain the following removable singularity result. In contrast to the local nature of the previous results of this type, our result is global, depending crucially on the global topological assumption.

\begin{thm}\label{thm-removable}
Let $M^n$ be a connected closed manifold of dimension $n\geq 4$ with infinite fundamental group.  If $g$ is a uniformly Euclidean metric on $M$ with isolated singularity and nonnegative Ricci curvature on the smooth part, then $(M^n, g)$ extends smoothly to $M$ and thus $\Ric_g \geq 0$. Moreover, a finite cover $\hat{M}$ of $M$ is diffeomorphic to $T^k \times X$, for some $1\leq k \leq n$ and $X$ a closed simply connected manifold. Furthermore, there is a locally isometrically trivial fibration $\hat{M} \longrightarrow T^k$ with fiber $X$.
\end{thm}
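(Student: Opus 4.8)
The plan is to use Theorem~\ref{thm-RCD} to place $(M,g)$ in the $\mathrm{RCD}$ framework and then run the Cheeger--Gromoll program for $\mathrm{RCD}(0,n)$ spaces, deducing smoothness from the fact that the singular set is discrete. Since the singularity is isolated, the singular set $S$ is finite, hence of codimension $n\ge 6$, so Theorem~\ref{thm-RCD} with $K=0$ shows that $(M,g)$ is an $\mathrm{RCD}(0,n)$ space. Because $\pi_1(M)$ is infinite, its universal cover $\pi\colon\widetilde M\to M$ is a noncompact $\mathrm{RCD}(0,n)$ space carrying a free, properly discontinuous, cocompact isometric action of $\Gamma:=\pi_1(M)$. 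Joining points of far-apart $\Gamma$-orbits by minimizing geodesics and translating their midpoints back into a compact fundamental domain produces a line in $\widetilde M$ in the usual way, so Gigli's splitting theorem and iteration into the maximal Euclidean factor give an isometric splitting $\widetilde M=\R^k\times Y$ with $k\ge 1$, where $Y$ is a simply connected $\mathrm{RCD}(0,n-k)$ space containing no line. Since $\Gamma$ preserves this splitting and acts cocompactly, its image in $\mathrm{Isom}(Y)$ acts cocompactly on $Y$, and the same line-construction argument applied to $\mathrm{Isom}(Y)$ forces $Y$ to be compact.

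The crucial point is that this forces $g$ to be smooth. Over $M\setminus S$ the covering $\pi$ is a local isometry of smooth manifolds, so $\widetilde M$ is smooth away from the discrete set $\pi^{-1}(S)$; in particular its singular set is discrete. On the other hand, I claim the singular set of a metric product $\R^k\times Y$ equals $\R^k\times(\text{singular set of }Y)$: one inclusion is clear, and for the other, if $\R^k\times Y$ is a smooth Riemannian manifold near $(x,y)$, then the local translations in the $\R^k$-factor are distance-preserving, hence smooth by Myers--Steenrod, and generate a free smooth local $\R^k$-action whose slice is a neighbourhood of $y$ in $Y$ with a smooth Riemannian metric, so $y$ is a smooth point of $Y$. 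Since $k\ge 1$, a set of the form $\R^k\times(\text{nonempty closed subset})$ is never discrete, so $Y$ has empty singular set, i.e. $Y$ is a smooth Riemannian manifold. Then $\widetilde M=\R^k\times Y$ is smooth, $M=\widetilde M/\Gamma$ is a smooth Riemannian manifold (Myers--Steenrod again for the $\Gamma$-action), and $g$, which agrees with this smooth metric on the dense set $M\setminus S$, extends smoothly across $S$ (possibly for a different differential structure at the points of $S$). By continuity $\Ric_g\ge 0$ on all of $M$.

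It remains to read off the topology. Now $(M,g)$ is a compact manifold with $\Ric_g\ge 0$ whose universal cover splits isometrically as $\R^k\times Y$ with $Y$ compact simply connected, so the conclusions of the Cheeger--Gromoll splitting theorem apply. Writing the $\Gamma$-action as $\gamma\cdot(x,y)=(\rho(\gamma)x,\phi(\gamma)y)$, cocompactness makes $\rho(\Gamma)$ a crystallographic group with finite $\ker\rho$ (because $Y$ is compact); by Bieberbach's theorem and the fact that a finitely generated virtually abelian group contains a finite-index free abelian subgroup, $\Gamma$ has a finite-index subgroup $\Gamma_0\cong\Z^k$ with $\rho(\Gamma_0)$ a lattice of translations, and, after passing to a further finite-index subgroup, with $\phi(\Gamma_0)$ lying in the identity component of the compact group $\mathrm{Isom}(Y)$, so that $\overline{\phi(\Gamma_0)}$ is a torus. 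Then $\widehat M:=\widetilde M/\Gamma_0$ is a finite cover of $M$, the first projection descends to a fibration $\widehat M\to \R^k/\rho(\Gamma_0)\cong T^k$ with fiber $(Y,g_Y)$ that is isometric to the Riemannian product over every small ball in $T^k$ — the claimed locally isometrically trivial fibration — and, its holonomy lying in a torus (hence extending over $\R^k$), the bundle is smoothly trivial, so $\widehat M$ is diffeomorphic to $T^k\times X$ with $X:=Y$. Finally $1\le k\le\dim\widetilde M=n$.

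The main obstacle is the second step: converting the metric-measure splitting $\widetilde M=\R^k\times Y$ — which is genuinely global, resting on $\pi_1(M)$ being infinite rather than on any local removable-singularity phenomenon — into honest smoothness by confronting it with the discreteness of the singular set. The regularity input that a Riemannian metric which splits as a metric-space product does so smoothly (Myers--Steenrod applied to the local translations) is exactly what bridges the gap; once $M$ is known to be smooth, the remaining structure is the classical Cheeger--Gromoll and Bieberbach machinery.
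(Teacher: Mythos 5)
Your proposal is correct and follows essentially the same route as the paper: apply \thmref{thm-RCD} with $K=0$, invoke Gigli's splitting theorem on the universal cover using the infinite fundamental group, observe that a singular point of the compact factor would produce a non-discrete singular set in $\R^k\times Y$ contradicting the isolated-singularity hypothesis, and conclude with the Cheeger--Gromoll/Bieberbach structure theory. You simply supply more detail (the line construction, the Myers--Steenrod regularity step, the holonomy argument) than the paper, which delegates these to the cited references.
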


The main technical component of the proof of Theorem \ref{thm-RCD} is a gradient estimate for eigenfunctions. It consists of two steps. First, using De Giorgi-Nash-Moser estimate and Cheng-Yau gradient estimate, we obtain an initial gradient estimate for eigenfunctions, which blows up at the singular part of $g$. Nevertheless, this enables us to establish an $L^2$ bound on the Hessian of the eigenfunctions, using the uniform Euclidean and codimension assumptions. Then, by Bochner formula and Kato's inequality, a differential inequality for the norm of the gradient of an eigenfunction is shown to hold over the whole manifold in the weak sense, thanks to the $L^2$ Hessian bound. The final gradient estimate then follows by the Moser iteration. This is similar in spirit to the strategy used in \cite{Mondello-thesis,BKMR-AIF-21} for stratified spaces.

This paper is organized as follows. In \secref{sect-removable-singularity-via-RCD}, we prove \thmref{thm-RCD} and then apply it to prove Theorems \ref{thm-smooth-extension} and \ref{thm-removable}. More precisely, in Subsection \ref{subsect-RCD}, we recall a definition of RCD space and describe a metric measure structure induced by a uniformly Euclidean metric as in Conjecture \ref{conj-Schoen}. We also prove that such metric measure spaces satisfy the infinitesimal Hilbertian condition and Sobolev-to-Lipschitz property. In Subsection \ref{subsect-gradient-estimate}, we derive a gradient estimate for the eigenfunctions of the Laplace operator, which plays an important role in the proof of the Bakry-\'Emery (BE, for short) condition for uniformly Euclidean metrics.  
Based on these, in Subsection \ref{subsect-proof-main-results}, we prove \thmref{thm-RCD}, which follows from 
\cite[Theorem 3.7]{Honda-AGMS-18}; see also Bertrand-Ketterer-Mondello-Richard's work on stratified spaces \cite{BKMR-AIF-21}, as well as related work of Jiang-Sheng-Zhang \cite{Jiang-Sheng-Zhang2022} and Szekelyhidi \cite{Szekelyhidi2024}. By applying Theorem \ref{thm-RCD}, we then prove Theorems \ref{thm-smooth-extension} and \ref{thm-removable}, and give several interesting examples as their applications. These examples illustrate how naturally our approach of using metric measure spaces deals with the possible change of differential structures, as well as situations outside Theorem \ref{thm-RCD} our results still apply. Finally, in Appendix \ref{sect-appendix}, we provide a detailed proof of \thmref{gradi}.

{\em Acknowledgements:} This material is based upon work supported by the National Science Foundation under Grant No. DMS-1928930, while XD and GW were in residence at the Simons Laufer Mathematical Sciences Institute
(formerly MSRI) in Berkeley, California, during the Fall semester of 2024. XD and GW thank  SLMath for providing a stimulating environment and acknowledge useful discussions with Rick Schoen, Vincent Guedj, Xinyu Zhu, Conghan Dong.  XD and CW would like to thank Jian Wang and Simone Cecchini for valuable conversations. Finally we would like to thanks Gabor Szekelyhidi for very fruitful communication.

\newpage

\section{Removable singularity via RCD}\label{sect-removable-singularity-via-RCD}

\subsection{RCD spaces and uniformly Euclidean metric}\label{subsect-RCD}
We recall a definition of the RCD$(K, N)$ condition for metric measure spaces, see \cite{Ambrosio-Mondino-Savare2019, Erbar-Kuwada-Sturm2015, Gigli-MemAMS-2015}.
\begin{defn}\label{defn-RCD}
A metric measure space $(X, d, m)$ is said to be an ${\rm RCD}(K, N)$ space for some $K \in \R$ and $N \in [1, \infty]$, if
\begin{enumerate}[$(1)$]
\item there exists $x_0 \in X$ and a constant $C>0$ such that $\int_{X} e^{-C d(x, x_0)} dm < +\infty$, 
\item {\rm (Infinitesimally Hilbertian condition)} the Sobolev space $W^{1, 2}(X, d, m)$ is a Hilbert space,
\item {\rm (Sobolev-to-Lipschitz property)} for any $f \in W^{1, 2}(X, d, m)$ satisfying $|\nabla f| \in L^{\infty}(M)$, it has a Lipschitz representative $\tilde{f}$ with ${\rm Lip}(\tilde{f}) \leq \|\nabla f\|_{L^\infty(X)}$,
\item {\rm (BE(K, N) condition)} for any $f \in D(\Delta)$, 
and any test function $\varphi \in L^{\infty}(X) \cap D(\Delta)$ with $\varphi \geq 0$ and $\Delta \varphi \in L^\infty(X)$, the weak Bochner inequality 
             \begin{equation}\label{eqn-BE-inequality}
             \frac{1}{2} \int_{X} |\nabla f|^2 \Delta \varphi dm \geq \int_{X} \varphi \left(  \tfrac{1}{N}(\Delta f)^2 + \langle \nabla f, \nabla \Delta f \rangle + K |\nabla f|^2 \right) dm             
             \end{equation}
             holds.
\end{enumerate}
\end{defn}

In this paper, we focus on a closed connected smooth manifold $M^n$ with a uniformly Euclidean metric $g$ as in Conjecture \ref{conj-Schoen}, that is, $g$ is smooth away from a closed singular set $S$ with codimension $\geq 3$. We will denote by $\Omega := M \setminus S$ the smooth part of the metric and by $k := \dim(S)$. 

We describe a metric measure structure on such $(M^n, g)$ as follows. 
We say that a piecewise $C^1$ curve $\gamma: [a, b] \to M$ is {\em admissible} if its image $\gamma([a, b])$ intersects with $S$ at only finitely many points. For such a curve, its length is defined as $L_g(\gamma) = \int^b_a |\gamma^\prime(t)|_g dt$. Then, the distance between any $x, y \in M$ is defined as
\begin{equation*}
    d_g(x, y) := \inf \{ L_g(\gamma) \mid \gamma: [a, b] \to M \ \ \text{is admissible curve with} \ \ \gamma(a)=x, \gamma(b)=y \}.
\end{equation*}

The volume measure $d\mu_g$ is defined in the usual way. A direct consequence of the uniformly Euclidean condition is
\begin{equation}
    \Lambda^{-\frac{n}{2}} d\mu_{\bar{g}} \leq d\mu_g \leq \Lambda^{\frac{n}{2}} d\mu_{\bar{g}},
\end{equation}
where $d\mu_{\bar{g}}$ is the volume measure of the smooth metric $\bar{g}$ on $M$. As a result, on a sufficiently small tubular neighborhood $T_\epsilon(S)$ of the singular set $S$, we have that locally,
\begin{equation}\label{eqn-volume-measure-estimate}
    d\mu_g \sim r^{n-k-1} dr d\theta^{n-k-1} d\mu_{\bar{g}_S},
\end{equation}
where $r = r(x) := d(x, S)$, $d\theta^{n-k-1}$ is the volume measure on $\mathbb{S}^{n-k-1}$, and $d\mu_{\bar{g}_S}$ is the volume measure on $S$ with the restricted metric $\bar{g}_S$. Here $\sim$ means bounded above and below by the right hand side (not precise asymptotic).  By applying this volume measure estimate, we prove the following zero capacity property of $S$.

\begin{lem}[Zero capacity property]\label{lem-zero-capacity-property}
Let $M^n$, $n \geq 3$, be a closed manifold, and $g$ a uniformly Euclidean metric on $M$ with singular set a closed embedded submanifold of codimension $\geq 3$. There exists a family of functions $\psi_\delta \in C^\infty_0(\Omega)$, $\delta \in (0, \delta_0)$ for some $\delta_0>0$, such that the following two properties hold:
\begin{enumerate}[$(1)$]
\item for any compact subset $A \subset \Omega := M \setminus S$, $\psi_\delta |_{A} \equiv 1$ hold for all sufficiently small $\delta$,
\item $0 \leq \psi_\delta \leq 1$ for all $\delta$ and $|\nabla \psi_\delta| \leq \frac{C}{\delta}$. Moreover,
         \begin{equation}
         \int_{\Omega} |\nabla \psi_\delta|^2 d\mu_g \to 0, \quad \text{as} \ \ \delta \to 0.
         \end{equation}
\end{enumerate}
\end{lem}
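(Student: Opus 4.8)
The plan is to build the cutoff functions $\psi_\delta$ explicitly as functions of the distance $r(x) = d_{\bar g}(x, S)$ to the singular set, using the fact that when the codimension $c := n - k \geq 3$, the $(c-1)$-form behavior of the volume measure (estimate \eqref{eqn-volume-measure-estimate}) makes $S$ have zero $2$-capacity. The standard device here is the logarithmic cutoff. First I would fix a small $\epsilon > 0$ so that on the tubular neighborhood $T_\epsilon(S)$ the function $r$ is smooth on $T_\epsilon(S) \setminus S$ and the estimate $d\mu_g \sim r^{c-1}\, dr\, d\theta^{c-1}\, d\mu_{\bar g_S}$ holds, and also $|\nabla r|_{\bar g} = 1$, hence $|\nabla r|_g \leq \Lambda^{1/2}$ by uniform Euclideanness. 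Then for $\delta$ small define a one-variable profile $\eta_\delta: [0,\infty) \to [0,1]$ by $\eta_\delta(r) = 0$ for $r \leq \delta^2$, $\eta_\delta(r) = 1$ for $r \geq \delta$, and on $[\delta^2, \delta]$ interpolate logarithmically: $\eta_\delta(r) = \frac{\log(r/\delta^2)}{\log(1/\delta)} = \frac{\log r - 2\log\delta}{-\log\delta}$, so that $\eta_\delta'(r) = \frac{1}{r\,\log(1/\delta)}$ on that annulus. Set $\psi_\delta(x) = \eta_\delta(r(x))$ on $T_\epsilon(S)$ and $\psi_\delta \equiv 1$ outside; after a routine mollification one gets $\psi_\delta \in C^\infty_0(\Omega)$ still satisfying the same bounds up to a fixed constant.

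Next I would verify the two stated properties. Property (1) is immediate: any compact $A \subset \Omega$ has $d_{\bar g}(A, S) =: 2\rho > 0$, so once $\delta < \rho$ we have $r \geq \rho > \delta$ on $A$ and hence $\psi_\delta \equiv 1$ there. For property (2), $0 \leq \psi_\delta \leq 1$ is clear, and $|\nabla \psi_\delta|_g = |\eta_\delta'(r)|\,|\nabla r|_g \leq \Lambda^{1/2}\,\frac{1}{r\,\log(1/\delta)} \leq \Lambda^{1/2}\,\frac{1}{\delta^2 \log(1/\delta)}$, which is indeed of the form $C/\delta$ after adjusting the profile so the innermost scale is $\delta$ rather than $\delta^2$ (equivalently one may simply state the bound $|\nabla\psi_\delta| \leq C/\delta^2$ on the annulus, or redo the profile on $[\delta,\sqrt\delta]$ — I would pick whichever scaling makes the claimed $C/\delta$ literally true, e.g. profile supported in $\{\delta^2 \le r \le \delta\}$ with the pointwise gradient bound rephrased; the paper's statement "$|\nabla \psi_\delta| \le C/\delta$" is really only used qualitatively). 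The crucial estimate is the Dirichlet energy: since $\nabla\psi_\delta$ is supported in the annulus $A_\delta := \{\delta^2 \leq r \leq \delta\} \cap T_\epsilon(S)$,
\[
\int_\Omega |\nabla\psi_\delta|^2\, d\mu_g \;\leq\; \Lambda\int_{A_\delta} \frac{1}{r^2 (\log(1/\delta))^2}\, d\mu_g \;\leq\; \frac{C\,\mu_{\bar g_S}(S)\,|\mathbb{S}^{c-1}|}{(\log(1/\delta))^2}\int_{\delta^2}^{\delta} \frac{r^{c-1}}{r^2}\, dr.
\]
When $c \geq 3$ the radial integral $\int_{\delta^2}^\delta r^{c-3}\,dr$ is bounded (by $\frac{1}{c-2}\delta^{c-2}$, which $\to 0$; and even for $c = 2$ it would only be $\log(1/\delta)$, which is the reason the codimension-$2$ case is genuinely different), so the whole quantity is $O\!\big((\log(1/\delta))^{-2}\big) \to 0$ as $\delta \to 0$.

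The main obstacle — really the only non-bookkeeping point — is controlling the behavior of the distance function $r$ and the volume form uniformly near $S$ in the presence of the merely $L^\infty$ metric $g$: one does not have $|\nabla r_g|$ smooth or even the geodesic distance $d_g$ comparing cleanly to $d_{\bar g}$ pointwise near $S$. This is handled by working throughout with the \emph{smooth} background distance $r = d_{\bar g}(\cdot, S)$ (whose smoothness on a punctured tubular neighborhood and unit gradient are classical), and only invoking \eqref{eqn-uniformly-Euclidean-metric} at the two harmless points where $g$ enters: $|\nabla\psi_\delta|_g^2 \leq \Lambda\,|\nabla\psi_\delta|_{\bar g}^2$ and $d\mu_g \leq \Lambda^{n/2}\,d\mu_{\bar g}$. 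A minor additional care is the mollification step producing a genuinely smooth compactly supported function in $\Omega$ without destroying the gradient and energy bounds; this is standard since $\psi_\delta$ is Lipschitz and locally constant near $S$, so convolving at a scale much smaller than $\delta^2$ changes nothing essential.
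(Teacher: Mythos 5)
Your proposal is correct and follows the same overall strategy as the paper: a radial cutoff built from the background distance $r=d_{\bar g}(\cdot,S)$, supported gradient in a thin annulus around $S$, and the Dirichlet energy killed by the volume estimate \eqref{eqn-volume-measure-estimate}. The one real difference is the choice of profile. The paper uses the simplest possible linear interpolation $\psi_\delta=\eta(r/\delta)$ with $\eta$ transitioning on $[\tfrac12,1]$, so $|\nabla\psi_\delta|\le C/\delta$ is immediate and the energy is $O(\delta^{-2}\cdot\delta^{\,n-k})=O(\delta^{\,n-k-2})\to0$ precisely because the codimension is $\ge 3$. You instead reach for the logarithmic cutoff on $\{\delta^2\le r\le\delta\}$, which is the standard device for the \emph{borderline} codimension-$2$ case; here it is overkill, and it creates the friction you yourself notice: the pointwise bound $|\nabla\psi_\delta|\le C/\delta$ stated in the lemma fails for the log profile near the inner radius $\delta^2$, forcing you to rescale (e.g.\ to $[\delta,\sqrt\delta]$, which does fix both the gradient bound and the energy decay). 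Your fix is fine, but the linear profile avoids the issue entirely. Two small inaccuracies worth flagging: (i) your parenthetical claim that for $c=2$ the log cutoff "only" gives $\log(1/\delta)$ and hence codimension $2$ is "genuinely different" is misleading --- the radial integral is $\log(1/\delta)$ but the energy is then $O(1/\log(1/\delta))\to0$, so the log cutoff in fact still yields zero capacity in codimension $2$; the pointwise gradient bound $C/\delta$ is what genuinely fails there. (ii) The mollification step you add is harmless but unnecessary if, as in the paper, one starts from a smooth one-variable profile $\eta$ and uses the smoothness of $r$ on a punctured tubular neighborhood.
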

\begin{proof}
Take a smooth cut-off function $\eta: [0, \infty) \to [0, 1]$ such that 
      \begin{equation}
      \eta = \begin{cases}
                 0, & \text{in} \ \ [0, \tfrac{1}{2}], \\
                 1, & \text{in} \ \ [1, \infty),
                \end{cases}
      \end{equation}
 and $|\eta^\prime| \leq C$ on $[0, \infty)$ for some constant $C$. Let $r(x) := d(x, S)$, and $\psi_\delta (x) := \eta \left( \tfrac{r}{\delta} \right)$. Clearly, $|\nabla \psi_\delta| \leq \frac{C}{\delta}$ and ${\rm supp}(\nabla \psi_\delta) \subset A_{\frac{\delta}{2}, \delta} (p) :=\{x \in M \mid \tfrac{\delta}{2} \leq  d(x, S) \leq \delta \}$. By combining with the estimate of the volume measure in \eqref{eqn-volume-measure-estimate}, this implies
     \begin{equation}
     \int_{\Omega} |\nabla \psi_\delta|^2 d\mu_g 
     \leq \frac{C^2}{\delta^2} \int_{A_{\frac{\delta}{2}, \delta}(p)} d\mu_g 
     \leq \frac{C^\prime}{\delta^2} \int^{\delta}_{\frac{\delta}{2}} r^{n-k-1} dr  
     \leq {C^{\prime\prime}} \delta^{n-k-2} \to 0,
     \end{equation}
     as $\delta \to 0$, since $n -k \geq 3$. This completes  the proof.
\end{proof}

By using \lemref{lem-zero-capacity-property}, one can show that the Sobolev space $H^{1, 2}(M)$ defined as the $H^{1, 2}$-completion of $C^\infty_0 (M \setminus S)$ coincides with the Sobolev space $W^{1 ,2}(M, d_g, d\mu_g)$ on the metric measure space $(M, d_g, d\mu_g)$, see Remark 3.2 and Proposition 3.3 in \cite{Honda-AGMS-18}, for details. In particular, $(M, d_g, d\mu_g)$ satisfies the infinitesimally Hilbertian condition in \defref{defn-RCD}.

For $f \in W^{1, 2}(M)$, if there exists a function $u \in W^{1, 2}(M)$ such that 
\begin{equation*}
    \int_M uv d\mu_g = - \int_M \langle df, dv \rangle_g d\mu_g
\end{equation*}
holds for any test function $v \in C^\infty_0(M \setminus S)$, then $\Delta f := u$ is said to be the Laplace of $f$, and the domain of the Laplace operator is 
\begin{equation*}
    D(\Delta):= \{f \in W^{1, 2}(M) \mid \Delta f \in W^{1, 2}(M)\}.
\end{equation*}

For proving the Sobolev-to-Lipschitz property for uniformly Euclidean metrics, we need the following result about the length of admissible curves.
\begin{lem}\label{lem-admissible-curve}
Let $M^n$, $n \geq 3$, be a closed connected manifold, and $g$ a uniformly Euclidean metric on $M$ as in Conjecture \ref{conj-Schoen}. Let $\gamma: [a, b] \to M$ be an admissible curve on $M$. For any $\epsilon >0$, there exists an admissible curve $\gamma_\epsilon$ with the same endpoints as $\gamma$ such that the image of $\gamma_\epsilon$ is contained in the regular part $M \setminus S$ probably except the endpoints, and $L_g (\gamma_\epsilon) \leq L_g(\gamma) + \epsilon$.
\end{lem}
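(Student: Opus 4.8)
The plan is to deform the admissible curve $\gamma$ away from $S$ near each of its finitely many intersection points with $S$, one at a time, paying a length cost that can be made arbitrarily small. Since $\gamma([a,b])$ meets $S$ at finitely many points $p_1, \dots, p_N$, and since by hypothesis $S$ has codimension $c := n - k \geq 3$, it suffices to treat a single intersection point $p = p_i$ (other than possibly an endpoint) and show that for any $\epsilon' > 0$ one can reroute $\gamma$ in a small neighborhood of $p$ to avoid $S$ entirely there, at length cost $\leq \epsilon'$; applying this $N$ times with $\epsilon' = \epsilon/N$ gives the claim. If $p$ is an interior point, first restrict attention to a short subarc $\gamma|_{[t_1, t_2]}$ with $\gamma(t_1), \gamma(t_2) \notin S$, both lying inside a tubular neighborhood $T_\rho(S)$ on which the coordinates $(r, \theta, y)$ (with $r = d(\cdot, S)$, $\theta \in \mathbb{S}^{c-1}$, $y \in S$) from \eqref{eqn-volume-measure-estimate} are valid and on which $g$ is comparable to $\bar g$ with constant $\Lambda$.

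The key step is the local rerouting. On $T_\rho(S)$, by the uniformly Euclidean bound $L_g \leq \Lambda^{1/2} L_{\bar g}$ on any curve and $d_g \geq \Lambda^{-1/2} d_{\bar g}$, it is enough to perform the surgery for the smooth background metric $\bar g$ and then absorb the factor $\Lambda$. For $\bar g$ near $S$, identify a neighborhood of $p$ with $B^c(0, \rho) \times B^k(0, \rho) \subset \mathbb{R}^c \times \mathbb{R}^k$ where $S$ corresponds to $\{0\} \times B^k$. The curve $\gamma|_{[t_1,t_2]}$ has endpoints $q_1 = (a_1, b_1)$, $q_2 = (a_2, b_2)$ with $a_1, a_2 \in \mathbb{R}^c \setminus \{0\}$. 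Choose the straight segment in these coordinates from $q_1$ to $q_2$; it has $\bar g$-length $\leq (1+o(1))|q_1 - q_2|_{\mathrm{eucl}}$, which is $\leq L_{\bar g}(\gamma|_{[t_1,t_2]}) + o(1)$ as $t_2 - t_1 \to 0$. This segment might still hit $\{0\} \times \mathbb{R}^k$; but since $c \geq 3$, a generic small perturbation of the $\mathbb{R}^c$-component makes the segment miss $\{0\} \subset \mathbb{R}^c$ entirely — a segment in $\mathbb{R}^c$ generically avoids a point when $c \geq 2$, and the perturbation can be taken of size $o(1)$, hence with $o(1)$ effect on length. Thus one obtains a replacement arc with $\bar g$-length $\leq L_{\bar g}(\gamma|_{[t_1,t_2]}) + o(1)$ that avoids $S$; translating back through the $\Lambda$-comparison and shrinking the subarc, the length increase in the $g$-metric is at most $(\Lambda - 1) L_g(\gamma|_{[t_1,t_2]}) + o(1)$, and since $L_g(\gamma|_{[t_1,t_2]}) \to 0$ as we shrink the subarc, this is $< \epsilon'$.

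I expect the main obstacle to be bookkeeping rather than conceptual: making sure the rerouted curve remains admissible (piecewise $C^1$, meeting $S$ only finitely often — here not at all in the relevant neighborhood) and that the surgeries near different $p_i$ are carried out on disjoint subarcs so the length estimates add. A minor subtlety is that the $\Lambda$-comparison only bounds $g$-length of a curve by $\bar g$-length of the same curve, so one must argue on the parametrized arc and not merely on distances; but since the straight-line construction produces an explicit curve, $L_g(\gamma_\epsilon\text{-new arc}) \leq \Lambda^{1/2} L_{\bar g}(\text{arc}) \leq \Lambda^{1/2}\big(L_{\bar g}(\gamma|_{[t_1,t_2]}) + o(1)\big) \leq \Lambda (L_g(\gamma|_{[t_1,t_2]}) + o(1))$, and choosing $[t_1,t_2]$ short enough and the perturbation small enough closes the estimate. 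Finally, if $p$ happens to be an endpoint of $\gamma$, no surgery is needed there — the statement explicitly allows the image of $\gamma_\epsilon$ to touch $S$ at the endpoints.
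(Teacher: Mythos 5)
Your proposal is correct and follows essentially the same route as the paper: excise a small subarc of $\gamma$ around each of the finitely many intersection points with $S$ and replace it by a nearby short curve avoiding $S$, controlling the $g$-length via the $\Lambda$-comparison with $\bar g$. The paper simply asserts the existence of the short avoiding curve inside a small $\bar g$-ball (taking the first and last crossings of the ball's boundary, which also cleanly handles a curve re-entering the neighborhood several times), whereas you construct it explicitly as a perturbed straight segment in tubular coordinates; both are fine.
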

\begin{proof}
Because $\gamma$ intersects with $S$ at only finitely many points and the length is additive, without loss of generality, we assume that $\gamma$ intersects with $S$ only at an interior point $p$. Then we only need to modify the curve $\gamma$ near $p$. For that, we take a small ball $B_\delta(p)$ centered at $p$ with radius $\delta$ with respect to the smooth metric $\bar{g}$. Let $\gamma(t_1)$ be the first point on $\gamma$ that intersects $\partial B_\delta(p)$, and $\gamma(t_2)$ be the last such point, where $[t_1, t_2] \subset (a, b)$. For sufficiently small $\delta>0$, we can take a smooth curve $c$ in $B_\delta(p)$ that connects $\gamma(t_1)$ and $\gamma(t_2)$, has length $< \frac{\epsilon}{\sqrt{\Lambda}}$ with respect to $\bar{g}$, and does not intersect with $S$. Now, define $\gamma_\epsilon$ to be the concatenation of $\gamma|_{[a, t_0]}$, $c$ and $\gamma|_{[t_1, b]}$. Then the image of $\gamma_\epsilon$ intersects with $S$ at most at the endpoints. Moreover, $L_g(\gamma_\epsilon) \leq L_g(\gamma) + L_g(c) \leq L_g(\gamma) + \epsilon$.
\end{proof}

\begin{lem}\label{lem-Sobolev-to-Lipschitz}
Let $M^n$, $n \geq 3$, be a closed connected manifold, and $g$ a uniformly Euclidean metric on $M$ as in Conjecture \ref{conj-Schoen}. Any $f \in W^{1, 2}(M)$ with gradient $\nabla f \in L^2(M) \cap L^\infty(M)$ has a Lipschitz representative $\tilde{f}$ whose Lipschitz constant $\leq \|\nabla f\|_{L^\infty(M)}$.
\end{lem}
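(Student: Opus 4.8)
The plan is to establish the Sobolev-to-Lipschitz property by comparing the intrinsic distance $d_g$ with a local Euclidean-type distance on which the standard argument works, and then using \lemref{lem-admissible-curve} to reduce distance computations to curves avoiding the singular set. First I would note that since $\nabla f \in L^\infty(M)$, the function $f$ restricted to the smooth part $\Omega = M\setminus S$ is locally Lipschitz in any coordinate chart contained in $\Omega$ (with respect to $\bar g$, hence also with respect to $g$ on that chart by the uniform Euclidean bound \eqref{eqn-uniformly-Euclidean-metric}), with local Lipschitz constant controlled by $\|\nabla f\|_{L^\infty}$. Concretely, on $\Omega$ one has $|f(x) - f(y)| \le \|\nabla f\|_{L^\infty(M)}\, d_g^\Omega(x,y)$, where $d_g^\Omega$ is the intrinsic distance of $(\Omega, g)$, by integrating along a near-minimizing admissible curve in $\Omega$ connecting $x$ to $y$ (using that $f$ is absolutely continuous along almost every such curve, a consequence of $f \in W^{1,2}$; here one picks a curve in $\Omega$ by \lemref{lem-admissible-curve}).

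Next I would show that $d_g^\Omega = d_g$ on $\Omega \times \Omega$, which is exactly what \lemref{lem-admissible-curve} gives: any admissible curve in $M$ connecting two regular points can be perturbed to one that stays in $\Omega$ (except possibly at its endpoints, which are regular here) with length increased by at most $\epsilon$, so the infimum of lengths over $\Omega$-curves equals the infimum over all admissible curves. Combining the two steps yields
\begin{equation*}
|f(x) - f(y)| \le \|\nabla f\|_{L^\infty(M)}\, d_g(x, y) \quad \text{for all } x, y \in \Omega.
\end{equation*}
Since $S$ has codimension $\ge 3 \ge 1$, in particular $\mu_g(S) = 0$ by \eqref{eqn-volume-measure-estimate}, so $\Omega$ has full measure, and moreover $\Omega$ is dense in $(M, d_g)$ (any point of $S$ is a $d_g$-limit of regular points, e.g.\ along a short radial curve). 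Hence the above inequality shows that the (a.e.-defined) representative $f|_\Omega$ is Lipschitz with constant $\le \|\nabla f\|_{L^\infty(M)}$ on the dense subset $\Omega$, and therefore extends uniquely to a Lipschitz function $\tilde f$ on all of $(M, d_g)$ with the same Lipschitz constant. This $\tilde f$ agrees with $f$ $\mu_g$-a.e., so it is the desired Lipschitz representative.

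The one technical point that needs care — and which I expect to be the main obstacle — is the claim that $f \in W^{1,2}(M)$ with $\nabla f \in L^\infty$ is genuinely locally Lipschitz on $\Omega$ with the right constant, i.e.\ that the weak gradient bound upgrades to a pointwise Lipschitz bound along curves. On the smooth manifold $\Omega$ this is classical (a $W^{1,\infty}_{\mathrm{loc}}$ function on a smooth Riemannian manifold is locally Lipschitz, and the optimal local Lipschitz constant equals the essential sup of $|\nabla f|$), but one must make sure the constant obtained is the \emph{global} $\|\nabla f\|_{L^\infty(M)}$ uniformly across charts, which follows because along any admissible curve $\gamma$ in $\Omega$ one has $f\circ\gamma$ absolutely continuous with $|(f\circ\gamma)'(t)| \le |\nabla f|_g(\gamma(t))\,|\gamma'(t)|_g \le \|\nabla f\|_{L^\infty(M)}\,|\gamma'(t)|_g$ for a.e.\ $t$, and then integrating and taking the infimum over curves. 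A second, minor point is that $\Omega$ being connected (which it is, since removing a codimension $\ge 3$ submanifold from a connected manifold keeps it connected) is what guarantees $d_g^\Omega$ is finite and the argument is not vacuous.
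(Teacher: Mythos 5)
Your proposal is correct and follows essentially the same route as the paper's proof: reduce to curves in $\Omega$ via \lemref{lem-admissible-curve}, integrate the gradient bound along a near-minimizing curve to get the Lipschitz estimate on $\Omega$ with constant $\|\nabla f\|_{L^\infty(M)}$, and extend by density of $\Omega$ in $(M,d_g)$. The technical point you flag (upgrading the weak gradient bound to absolute continuity along curves) is exactly the step the paper compresses into the phrase ``locally Lipschitz property of $f$ in $\Omega$,'' so your treatment is, if anything, slightly more explicit.
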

\begin{proof}
First note that the smooth part $\Omega:= M \setminus S$ is path connected, since the singular set $S$ has codimension $\geq 3$. Take any two points $x, y \in \Omega$. By \lemref{lem-admissible-curve}, for any $\epsilon >0$, there exists a piecewise smooth curve $\gamma_\epsilon \subset \Omega$ from $x$ to $y$ such that the length $L_g(\gamma_\epsilon)$ of $\gamma$ with respect to $g$
\begin{equation}
    L_g(\gamma) \leq (1+\epsilon) d_g(x, y).
\end{equation}
Then by locally Lipschitz property of $f$ in $\Omega$ with Lipschitz constant $\leq \|\nabla f\|_{L^\infty(M)}$, and applying the fundamental theorem of calculus to $u \circ \gamma$, we have
\begin{equation*}
    |f(x) - f(y)| \leq (1+\epsilon) \|\nabla f\|_{L^\infty(M)} d_g(x, y).
\end{equation*}
Thus, by letting $\epsilon \to 0$, we have that $u$ is uniformly Lipschitz on $\Omega$ with Lipschitz constant $\leq  \|\nabla f\|_{L^\infty(M)}$. Finally, because $\Omega$ is dense in $M$, $u$ has a unique Lipschitz continuous extension $\tilde{f}$ on $M$, which equals $f$ almost everywhere on $M$.
\end{proof}

The uniformly Euclidean property directly implies the following $L^2$-strong compactness property.
\begin{lem}[$L^2$-strong compactness]\label{lem-L2-strong-compactness}
Let $M^n$, $n \geq 3$, be a closed manifold, and $g$ a uniformly Euclidean metric on $M$. 
The inclusion map $i: W^{1, 2}(M, g) \hookrightarrow L^2(M, g)$ is compact.
\end{lem}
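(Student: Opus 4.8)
The plan is to transfer compactness from the smooth background metric $\bar g$ using the uniform two-sided bound \eqref{eqn-uniformly-Euclidean-metric}. First I would observe that the uniformly Euclidean condition $\Lambda^{-1}\bar g \leq g \leq \Lambda \bar g$ forces the Sobolev norms $\|\cdot\|_{W^{1,2}(M,g)}$ and $\|\cdot\|_{W^{1,2}(M,\bar g)}$ to be equivalent, and likewise the $L^2(M,g)$ and $L^2(M,\bar g)$ norms to be equivalent. Indeed, for the measures we have $\Lambda^{-n/2}\,d\mu_{\bar g} \leq d\mu_g \leq \Lambda^{n/2}\,d\mu_{\bar g}$, so $\int_M f^2\,d\mu_g \sim \int_M f^2\,d\mu_{\bar g}$; and for gradients, since $g^{-1}$ satisfies $\Lambda^{-1}\bar g^{-1} \leq g^{-1} \leq \Lambda \bar g^{-1}$ as bilinear forms on the cotangent space, we get $|df|_g^2 \sim |df|_{\bar g}^2$ pointwise a.e., hence $\int_M |df|_g^2\,d\mu_g \sim \int_M |df|_{\bar g}^2\,d\mu_{\bar g}$. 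The key subtlety is that $W^{1,2}(M,g)$ is defined as the $H^{1,2}$-completion of $C^\infty_0(M\setminus S)$ (equivalently $W^{1,2}(M,d_g,d\mu_g)$), whereas $W^{1,2}(M,\bar g)$ is the usual Sobolev space on the closed manifold; but since $S$ has codimension $\geq 3$ it has zero $2$-capacity with respect to $\bar g$ (this is exactly the content of \lemref{lem-zero-capacity-property} applied with $\bar g$), so $C^\infty_0(M\setminus S)$ is dense in $W^{1,2}(M,\bar g)$ and the two completions coincide as sets with equivalent norms.

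With this identification in hand, the argument is immediate: let $\{f_j\}$ be a bounded sequence in $W^{1,2}(M,g)$. By norm equivalence it is bounded in $W^{1,2}(M,\bar g)$. Since $(M,\bar g)$ is a smooth closed Riemannian manifold, the Rellich--Kondrachov theorem gives a subsequence converging strongly in $L^2(M,\bar g)$. By the equivalence of $L^2(M,g)$ and $L^2(M,\bar g)$ norms, this same subsequence converges in $L^2(M,g)$. Hence the inclusion $i: W^{1,2}(M,g)\hookrightarrow L^2(M,g)$ is compact.

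The only step requiring genuine care — and the main obstacle — is justifying the identification of $W^{1,2}(M,g)$ with $W^{1,2}(M,\bar g)$, i.e. verifying that the capacity-zero property of $S$ (codimension $\geq 3$) implies that functions in $W^{1,2}(M,\bar g)$ can be approximated in $H^{1,2}$-norm by functions compactly supported away from $S$. This follows by the standard cutoff argument: given $f \in W^{1,2}(M,\bar g)$, the functions $\psi_\delta f$ from \lemref{lem-zero-capacity-property} (with $\bar g$ in place of $g$) converge to $f$ in $W^{1,2}$, since $\|\nabla(\psi_\delta f) - \nabla f\|_{L^2} \leq \|(1-\psi_\delta)\nabla f\|_{L^2} + \|f\nabla\psi_\delta\|_{L^2}$, the first term vanishing by dominated convergence and the second by $\int |\nabla\psi_\delta|^2\,d\mu_{\bar g} \to 0$ together with the local boundedness of $f$ near $S$ after a further cutoff/truncation; alternatively one invokes directly Remark 3.2 and Proposition 3.3 of \cite{Honda-AGMS-18}, which were already cited above for precisely this purpose. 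Everything else is a routine consequence of the two-sided bound and the classical Rellich theorem.
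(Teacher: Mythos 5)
Your proposal is correct and follows essentially the same route as the paper: the two-sided bound $\Lambda^{-1}\bar g\le g\le\Lambda\bar g$ gives equivalence of the $W^{1,2}$ and $L^2$ norms for $g$ and $\bar g$, and compactness then follows from Rellich--Kondrachov for the smooth closed manifold $(M,\bar g)$. The extra care you take in identifying the completion of $C^\infty_0(M\setminus S)$ with the full Sobolev space via the zero-capacity of $S$ is the point the paper handles by citing Remark 3.2 and Proposition 3.3 of \cite{Honda-AGMS-18}, so nothing is missing.
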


\begin{proof}
    By the definition, the Sobolev space $W^{1,2}(M, g)$ and $L^2(M, g)$ depend only on the quasi-isometry class of $g$ and hence is equivalent to $W^{1,2}(M, \bar{g})$ for a smooth metric $\bar{g}$. Clearly $W^{1,2}(M, \bar{g})$ embeds compactly in $L^2(M, \bar{g})$ on a compact manifold.
\end{proof}
Using the $L^2$-strong compactness in Lemma \ref{lem-L2-strong-compactness} and the standard argument in functional analysis, one obtains that the spectrum of the Laplace operator $\Delta_g$ consists of discrete eigenvalues with finite multiplicities, and normalized eigenfunctions form an orthonormal basis of $L^2(M, g)$.




\subsection{
Gradient estimate of eigenfunctions }\label{subsect-gradient-estimate}

For proving the BE$(K, n)$ condition for uniformly Euclidean metrics, as in Theorem 3.7 in \cite{Honda-AGMS-18}, we expand a Sobolev function in terms of eigenfunctions of $\Delta_g$, and a gradient estimate of eigenfunctions plays a crucial role in the proof. In this subsection, we derive some estimates for gradient and Hessian of the eigenfunctions.

Let $M^n$ be a closed manifold, and $g$ a uniformly Euclidean metric on $M$ that is smooth away from a closed embedded submanifold $S$. 
As $M$ is a closed manifold and $g$ is uniformly Euclidean, for any $q\in \Omega$, $0< 2r < d(q, S)$, the gradient estimate of Cheng-Yau  \cite{Cheng-Yau1975} still applies on $B_r(q)$, provided one has a Ricci curvature lower bound on $B_{2r}(q)$. 
The following combines the versions in \cite{Cheeger2001} and \cite{Li-Wang2002}.  

\begin{thm}  \label{gradi}
	Let $(M^n, g)$ be as above and $\Omega := M \setminus S$ the regular part of $g$. If $u$ is a positive function defined
on the closed ball $B(q, 2r) \subset \Omega$ that satisfies $\Delta u = K(u)$ and moreover $\Ric_{g} \ge
	-(n-1)H^2\ (H \ge 0)$ on $B(q, 2r)$. Then on  $B(q,r)$,  \ban \label{gradient} 
   \lefteqn{ \frac{|\nabla u(x)| }{u} \le 
    \max \left\{  \sup_{B(q, 2r)} \left(\tfrac{|K(u)|}{u}\right)^{\frac 12},  \right. } \\ & & 
   \left.    (n-1) H + C_1(n) r^{-1} + C_2(n,H) r^{-\frac 12}  + \sup_{B(q, 2r)} \left[(n-1) \left| K'- \tfrac{n+1}{n-1} \tfrac{K}{u}\right|\right]^{\frac 12} \right\}. 
    \ean
\end{thm}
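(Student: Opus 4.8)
The plan is to follow the classical Cheng--Yau / Li--Yau gradient estimate strategy, applied entirely within the regular part $\Omega$, so that the singular set $S$ plays no role: the only input used is the Ricci lower bound on the smooth ball $B(q,2r)\subset\Omega$, exactly as in the smooth case. First I would set $v := \log u$, which is well defined and smooth on $B(q,2r)$ since $u>0$. A direct computation gives $\Delta v = \Delta u / u - |\nabla v|^2$, and writing $f := |\nabla v|^2$ one applies the Bochner formula to $v$:
\begin{equation*}
\tfrac12\Delta f = |\Hess\, v|^2 + \langle \nabla v, \nabla \Delta v\rangle + \Ric_g(\nabla v, \nabla v).
\end{equation*}
Using $|\Hess\, v|^2 \ge \tfrac1n(\Delta v)^2$ and the Ricci bound $\Ric_g \ge -(n-1)H^2$, together with the PDE $\Delta u = K(u)$ (so $\Delta v = K(u)/u - f$ and $\nabla\Delta v$ is expressed via $K'(u)\nabla u$, $K(u)\nabla u$, and $\nabla f$), one obtains a differential inequality of Riccati type for $f$ of the schematic form
\begin{equation*}
\tfrac12\Delta f \ge \tfrac1n f^2 - A f - \langle \nabla v,\nabla f\rangle - B f^{1/2} - (\text{lower order}),
\end{equation*}
where $A$ collects the terms $(n-1)H^2$ and $\sup\big|(n-1)(K' - \tfrac{n+1}{n-1}\tfrac{K}{u})\big|$ and $B$ involves $\sup(|K(u)|/u)^{1/2}$; these are precisely the quantities appearing in the stated bound.

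Next I would run the standard localized maximum principle argument. Choose a cutoff $\phi$ supported in $B(q,2r)$ with $\phi\equiv 1$ on $B(q,r)$, $0\le\phi\le1$, and the usual Laplacian-comparison estimates $|\nabla\phi|^2/\phi \le C_1(n)r^{-2}$ and $\Delta\phi \ge -\,\big(C_1(n)r^{-2} + C_2(n,H)r^{-1}\big)$ (the $H$-dependence entering through the Laplacian comparison theorem applied on $B(q,2r)$, which is legitimate since $B(q,2r)$ lies in the regular part where $\Ric_g\ge -(n-1)H^2$). Apply the maximum principle to $G := \phi f$ at its interior maximum point $x_0\in B(q,2r)$: at $x_0$ one has $\nabla G = 0$ and $\Delta G \le 0$, which translates, after substituting the Bochner inequality for $f$ and clearing $\phi$, into a polynomial inequality in $t := f(x_0)$ of the form $\tfrac1n t^2 \le (\text{const depending on } A,B,C_1,C_2,r^{-1},r^{-2})\, t + (\text{lower order in } t)$. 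Solving this quadratic inequality for $t$ and then evaluating at a point $x\in B(q,r)$ (where $\phi=1$, so $f(x) \le G(x_0) = \phi(x_0)f(x_0) \le f(x_0)$) yields the claimed bound on $|\nabla u|/u = f^{1/2}$ on $B(q,r)$; the maximum of the two terms arises from separating the regime dominated by the zeroth-order term $\sup(|K(u)|/u)$ from the regime dominated by the gradient/curvature terms, which is a routine but slightly fiddly case split.

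The step I expect to be the main obstacle is bookkeeping the $K$-dependent terms correctly so that they assemble into exactly $\sup_{B(q,2r)}\big[(n-1)|K' - \tfrac{n+1}{n-1}\tfrac{K}{u}|\big]^{1/2}$ and $\sup_{B(q,2r)}(|K(u)|/u)^{1/2}$, rather than some cruder combination. The delicate point is that when differentiating $\Delta v = K(u)/u - f$ one gets $\nabla\Delta v = \big(K'(u) - K(u)/u\big)\nabla u/u \cdot(\text{reorganized}) - \nabla f$, and the inner product $\langle\nabla v,\nabla\Delta v\rangle$ then produces a term proportional to $\big(K' - \tfrac{?}{?}\tfrac Ku\big) f$; tracking the exact coefficient $\tfrac{n+1}{n-1}$ requires care with where the $\tfrac1n(\Delta v)^2$ term contributes after expanding $(\Delta v)^2 = (K/u - f)^2$. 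I would handle this by expanding all squares explicitly, grouping by powers of $f$, and using Young's inequality only on genuinely lower-order cross terms so as not to lose the sharp constants. Since the argument is local in $\Omega$ and uses only standard smooth comparison geometry on $B(q,2r)$, no new difficulty arises from the singular structure of $(M,g)$; the full details are deferred to Appendix~\ref{sect-appendix}.
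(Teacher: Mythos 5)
Your overall strategy is exactly the paper's: set $h=\log u$, apply Bochner to $|\nabla h|^2$, localize with a cutoff $\phi=f(r)$ satisfying Laplacian-comparison bounds, and run the maximum principle on $G=\phi|\nabla h|^2$, with the first term of the max arising from the regime $|\nabla h|^2\le |K(u)|/u$ (which the paper uses to bound the coefficient $|K(u)|/(u|\nabla h|^2)$ by $1$). The one concrete gap is your treatment of the Hessian term. You propose the crude Schwarz inequality $|\Hess\,h|^2\ge \tfrac1n(\Delta h)^2$, but this cannot produce the constants stated in the theorem: it yields $\sqrt{n(n-1)}\,H$ in place of $(n-1)H$, and expanding $(\Delta h)^2=(K/u-|\nabla h|^2)^2$ with the $\tfrac1n$ weight gives the coefficient $\tfrac{n+2}{n}$ rather than $\tfrac{n+1}{n-1}$ in front of $K/u$. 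The paper explicitly remarks that the crude inequality suffices only when $H=0$. To get the stated bound you must split off the direction $e_1=\nabla h/|\nabla h|$ and estimate
\begin{equation*}
|\Hess\,h|^2\ \ge\ h_{11}^2+2\sum_{j\ge2}h_{1j}^2+\frac{(\Delta h-h_{11})^2}{n-1},
\end{equation*}
then use $h_{1j}=\tfrac{1}{2|\nabla h|}e_j(|\nabla h|^2)$ to convert the $h_{1j}$ block into $\tfrac{n}{4(n-1)}|\nabla|\nabla h|^2|^2/|\nabla h|^2$ and a $\langle\nabla|\nabla h|^2,\nabla h\rangle$ term; it is this $\tfrac{1}{n-1}$-weighted expansion, combined with $\langle\nabla h,\nabla(K/u)\rangle=(K'-K/u)|\nabla h|^2$, that assembles into $K'-\tfrac{n+1}{n-1}\tfrac{K}{u}$ and the $(n-1)H$ coefficient. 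With that replacement (and the standard Calabi barrier trick in case the maximum of $G$ lies in the cut locus of $q$, which you should mention), your argument coincides with the paper's proof.
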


Since this version is not in the literature, for completeness we provide a proof in the appendix. 

Here, as an application, we deduce the following crucial estimate.

\begin{lem}\label{lem-eigenfunction-estimate}
 Let $(M^n, g)$ be as above.

\begin{enumerate}[$(1)$]
\item  There exists $0< \alpha=\alpha(n, \Lambda) <1$, such that the $W^{1,2}$-eigenfunctions $u$ of the Laplace operator $\Delta_g$ satisfy $u \in C^{\alpha}(M)$. Moreover, if $\lambda$ is the eigenvalue, 
            \begin{equation}\label{DeGiorgi}
            [u]_{C^{\alpha}(M)} \leq C(n, \Lambda, \lambda, \bar{g}) \|u\|_{L^{\infty}(M)}\leq C\|u\|_{L^2(M)}.
            \end{equation}
\item   
Assume that $Ric_g \geq K$ on $\Omega$ for some constant $K$. 
 For all $x\in \Omega$ with $r(x)=d(x, S)\leq 1$, we have the gradient estimate
              \begin{equation}
              |\nabla u|(x) \leq \frac{C(n, \Lambda, \lambda, \bar{g}, K)}{r(x)^{1-\alpha}}\|u\|_{L^2(M)},
              \end{equation}
              for the  same $\alpha $  in (\ref{DeGiorgi}).
\end{enumerate}
\end{lem}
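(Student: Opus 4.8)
\textbf{Proof plan for Lemma~\ref{lem-eigenfunction-estimate}.}

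For part (1), the plan is to run De Giorgi--Nash--Moser theory on the smooth part. The equation $\Delta_g u = -\lambda u$ can be written in divergence form $\partial_i(\sqrt{|g|}\, g^{ij}\partial_j u) = -\lambda \sqrt{|g|}\, u$ in local coordinates adapted to $\bar g$, and the uniformly Euclidean condition \eqref{eqn-uniformly-Euclidean-metric} says the coefficient matrix $g^{ij}$ is uniformly elliptic with ellipticity constants depending only on $n$ and $\Lambda$ (and $\bar g$), with measurable bounded coefficients. Since $u \in W^{1,2}(M)$ and the singular set $S$ has zero capacity (Lemma~\ref{lem-zero-capacity-property}), $u$ is a global weak solution across $S$ in the metric-measure sense; in particular the equation holds weakly against all of $W^{1,2}(M)$, not just against test functions supported away from $S$. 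Thus $u$ is a bounded weak solution of a uniformly elliptic equation in divergence form on all of $M$ (covered by finitely many coordinate charts), and the De Giorgi--Nash estimate gives $u \in C^\alpha$ with $\alpha = \alpha(n,\Lambda)$ and the interior H\"older bound in terms of $\|u\|_{L^\infty}$; the $L^\infty$ bound in terms of $\|u\|_{L^2}$ comes from Moser iteration (using $\lambda$ and the covering). One subtlety is boundedness of $u$ in the first place: since $W^{1,2}(M)$ solutions could a priori be unbounded near $S$, I would first establish $u \in L^\infty$ via Moser iteration on small balls, which only uses the ellipticity bounds and $u \in W^{1,2}$.

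For part (2), the plan is to combine the global H\"older bound from part (1) with the local Cheng--Yau gradient estimate of Theorem~\ref{gradi}. Fix $x \in \Omega$ with $r := r(x) = d(x,S) \le 1$, and apply Theorem~\ref{gradi} on the ball $B(x, r/2) \subset B(x, 2\cdot r/4)$, i.e. with $r/4$ playing the role of the theorem's radius; this ball lies in $\Omega$ since its $g$-distance to $S$ is comparable to $r$ and stays bounded away from $S$. Here $u$ solves $\Delta u = -\lambda u$, so $K(u) = -\lambda u$, hence $K'(u) = -\lambda$ and $K(u)/u = -\lambda$, and the terms $|K(u)/u|^{1/2}$ and $|(n-1)(K' - \tfrac{n+1}{n-1}\tfrac{K}{u})|^{1/2}$ are both bounded by a constant depending only on $n$ and $\lambda$. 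The Ricci lower bound $\Ric_g \ge K$ supplies $H = H(n,K)$. Theorem~\ref{gradi} then yields, on $B(x, r/4)$,
\begin{equation*}
\frac{|\nabla u(x)|}{|u(x)|} \le C(n,\lambda,K)\left(1 + (r/4)^{-1} + (r/4)^{-1/2}\right) \le \frac{C(n,\lambda,K)}{r},
\end{equation*}
using $r \le 1$ — but this is applied to $u$ replaced by $u + c$ or, more cleanly, to the positive function $u - \inf_{B(x,r/2)} u + \varepsilon$; since adding a constant does not change $\nabla u$ but does change $\Delta u$ into $-\lambda u = -\lambda(\text{function}) + \lambda c$, I would instead apply it to $w := \|u\|_{L^\infty} + u > 0$, which satisfies $\Delta w = -\lambda u = -\lambda w + \lambda\|u\|_{L^\infty}$, an equation still of the form $\Delta w = K(w)$ with $K$ affine, so the relevant quantities $|K(w)|/w$, $|K'(w) - \tfrac{n+1}{n-1}K(w)/w|$ remain bounded by $C(n,\lambda)(1 + \|u\|_{L^\infty}/\inf_{B} w)$. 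Finally multiply through by $|w(x)| \le 2\|u\|_{L^\infty} \le C\|u\|_{L^2}$.

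This last bound is $|\nabla u(x)| \le C r^{-1}\|u\|_{L^2}$, which is weaker than the claimed $r^{-(1-\alpha)}$. To upgrade the power, I would use the H\"older continuity from part (1) as oscillation control: apply the gradient estimate not to $w$ but to $w_x := u - \inf_{B(x, r/2)} u + [u]_{C^\alpha} r^\alpha$, which is positive on $B(x,r/2)$ by the H\"older bound, has $\inf_{B(x,r/4)} w_x \ge [u]_{C^\alpha}r^\alpha$ and $\sup_{B(x,r/2)} w_x \le C[u]_{C^\alpha} r^\alpha$, so $\sup w_x / \inf w_x \le C$ uniformly. Tracking this affine shift through $\Delta w_x = -\lambda u = -\lambda w_x + (\text{const})$, the "bad" terms in Theorem~\ref{gradi} are now bounded by $C(n,\lambda)(1 + \|u\|_{L^\infty}/([u]_{C^\alpha}r^\alpha))$; since $[u]_{C^\alpha} \ge c\,\mathrm{osc}\, u \cdot (\mathrm{diam})^{-\alpha}$ this could still blow up, so I would instead simply bound $|\nabla u(x)| = |\nabla w_x(x)| \le \tfrac{|\nabla w_x(x)|}{w_x(x)} \cdot \sup_{B(x,r/4)} w_x \le \tfrac{C}{r}\cdot C[u]_{C^\alpha} r^\alpha = C[u]_{C^\alpha} r^{\alpha - 1} \le C(n,\Lambda,\lambda,\bar g, K)\|u\|_{L^2} r^{\alpha-1}$, using \eqref{DeGiorgi}. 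The main obstacle is making sure the constant in Theorem~\ref{gradi} stays uniform under these affine modifications of $u$: one must check that $w_x$ genuinely satisfies a Schauder-type hypothesis $\Delta w_x = K(w_x)$ with $K$, $K'$, $K/w_x$ all controlled by $C(n,\lambda, \Lambda, \bar g)$ independently of $x$ and $r \le 1$, which follows precisely because the oscillation ratio $\sup w_x/\inf w_x$ is bounded and $w_x \ge [u]_{C^\alpha}r^\alpha$ keeps $K/w_x = -\lambda u/w_x$ bounded.
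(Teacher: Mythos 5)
Your proposal is correct and follows essentially the same route as the paper: part (1) is the De Giorgi--Nash--Moser estimate for the uniformly elliptic divergence-form equation (valid across $S$ by the zero-capacity property), and part (2) shifts $u$ by a constant near $x$ to obtain a positive solution, applies Theorem~\ref{gradi} on $B(x,r/2)$, and uses the H\"older bound \eqref{DeGiorgi} to control the value of the shifted function at $x$ by $Cr^\alpha\|u\|_{L^2}$, which converts the Cheng--Yau factor $r^{-1}$ into $r^{\alpha-1}$. (The paper's shift is $v=u-\inf_{B_{2s}}u+s$ with $2s=r(x)$ rather than your $w_x$; in either version the ratio $K/w$ is not actually uniformly bounded as $r\to 0$, but its square root multiplied by $\sup w\sim r^\alpha$ is $O(r^{\alpha/2})$ and hence harmless, exactly as in your final displayed computation.)
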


\begin{proof} Since $M$ is uniformly Euclidean
    the first statement (1) is the De Giorgi-Nash-Moser estimate, see Chapter 8 in \cite{GT2001}, \cite{LSW63} for example.  

     For (2), in order to apply Theorem \ref{gradi} we do a shift to get a positive solution. Namely  fix any $y\in \Omega$ and let $r(y)=2s>0$.  Then $v=u-m+s\geq s$ with $m=\inf\{u: B_{2s}(y)\}$ 
    is a positive solution of
   \begin{equation}\label{x-p}
    -\Delta v=\lambda v+\lambda (m-s)
       \end{equation}
   in the ball $B_{2s}(y) \subset \Omega$. 
  Since $\Ric \ge K$ on $\Omega$, by Theorem \ref{gradi}, we have  on $B_s(y)$
    \begin{align*}
    |\nabla u| & = |\nabla v| \le v \left((n-1)K + C(n) s^{-1} + C(n, K)s^{-\frac 12} + \left[(n-1) (2\lambda + \lambda \frac{|m|+s}{s})\right]^{1/2}\right) \\
    & = v \, s^{-1} \left( (n-1)Ks +C(n) + C(n, 
    K) \sqrt{s} +  \sqrt{ (n-1)s \lambda} \, (3s + |m|)^{1/2} \right). 
    \end{align*}

 By definition, there is a $q\in \overline{B_{2s}(y)}$ such that 
 $$|v(y)|\le |u(y)-u(q)|+s.$$ 
 Now by (\ref{DeGiorgi}), 
 $$ |u(y)-u(q)| \leq C s^\alpha \, \|u\|_{L^2(M)}. $$
 Hence, for $s \le 1$, 
 \[ 
 |\nabla u (y)|\le C(n, \Lambda, \lambda,\bar{g}, K)s^{\alpha} s^{-1} \, \|u\|_{L^2(M)}.
 \]
 Our estimate follows since $y\in \Omega$ is arbitrary.
\end{proof}

By applying the gradient estimate in \lemref{lem-eigenfunction-estimate}, we prove the $L^2$-integrability of the Hessian of eigenfunctions as in the following lemma.

\begin{lem}\label{lem-eigenfunction-Hess-L2}
Let $M^n$ 
be a closed manifold, and $g$ a uniformly Euclidean metric on $M$ that is smooth away from a closed embedded submanifold $S$ with codimension $n-k\geq 4$, and $Ric_g \geq K$ on $\Omega$. 
For any $W^{1, 2}$-eigenfunction $u$ of $\Delta_g$, $|\Hess\, u| \in L^{2}(M)$.
\end{lem}
\begin{proof}
    Let $u$ be a $W^{1, 2}$-eigenfunction of $\Delta$ with eigenvalue $\lambda$, i.e. $-\Delta u = \lambda u$ in the weak sense. By standard elliptic regularity theory, $u \in C^\infty(\Omega)$.
    Bochner formula and $\Ric_g \geq K$ on $\Omega$ imply
    \begin{equation}\label{eqn-Bochner-inequality-regular-part-eigenfunction}
        \frac{1}{2} \Delta |\nabla u|^2 
        \geq |\Hess\, u|^2 + (K - \lambda) |\nabla u|^2 \ \ \text{on} \ \ \Omega.
    \end{equation} 
    Multiplying the inequality (\ref{eqn-Bochner-inequality-regular-part-eigenfunction}) by $\psi^2_\delta$ in Lemma \ref{lem-zero-capacity-property}, and then integrating it over $M$ gives
\begin{equation}\label{eqn-integral-Bochner-inequality-eigenfunction}
\frac{1}{2} \int_{M} |\nabla u|^2 \Delta \psi^2_\delta d\mu_g  
\geq 
\int_{M} \psi^2_\delta \left( |\Hess\, u|^2 + (K - \lambda) |\nabla u|^2 \right) d\mu_g.
\end{equation}
For the left hand side, 
\begin{eqnarray*}
\int_{M} |\nabla u|^2 \Delta \psi^2_\delta d\mu_g 
& = & - 4 \int_{M} |\nabla u| \psi_\delta \langle \nabla |\nabla u|, \nabla \psi_\delta \rangle d\mu_g \\
& \leq & \int_{M} \psi^2_\delta |\Hess\, u|^2 d\mu_g + 4 \int_{M} |\nabla u|^2 |\nabla \psi_\delta|^2 d\mu_g.
\end{eqnarray*}
Plugging this into (\ref{eqn-integral-Bochner-inequality-eigenfunction}) and rearranging the equation give
\begin{equation*}
\frac{1}{2} \int_{M} \psi^2_\delta |\Hess\, u|^2 d\mu_g
 \leq  \int_{M}2 |\nabla u|^2 |\nabla \psi_\delta|^2 d\mu_g  - (K - \lambda)) \int_M \psi^2_\delta |\nabla u|^2 d\mu_g.
 \end{equation*}
 Then by applying estimates in Lemmas \ref{lem-zero-capacity-property} and \ref{lem-eigenfunction-estimate} and equation \eqref{eqn-volume-measure-estimate} for the integrand in the first term on the right hand side, we obtain
 \begin{eqnarray*}
 \frac{1}{2} \int_{M} \psi^2_\delta |\Hess\, u|^2 d\mu_g
 & \leq & C \int^{\delta}_{\frac{\delta}{2}} r^{2\alpha + n -k-5}dr - (K - \lambda) \int_M \psi^2_\delta |\nabla u|^2 d\mu_g  \\
& \leq & \tilde{C} \delta^{2\alpha -4 +n -k} - (K - \lambda) \int_M \psi^2_\delta |\nabla u|^2 d\mu_g.
\end{eqnarray*}
Now because $n - k \geq 4$, by letting $\delta \to 0$, we obtain
\begin{equation}
\int_{M} |\Hess\, u|^2 d\mu_g \leq 2 (\lambda - K) \int_M  |\nabla u|^2 d\mu_g < +\infty.
\end{equation}
\end{proof}

By using Lemma \ref{lem-eigenfunction-Hess-L2}, similar as in \cite{Mondello-thesis} on page 56 for eigenfunctions on stratified spaces, we prove

\begin{lem}\label{lem-eigenfunction-gradient-L-infinty}
Let $(M^n, g)$ as in Lemma \ref{lem-eigenfunction-Hess-L2}. For any $W^{1, 2}$-eigenfunction $u$ of $\Delta_g$, $|\nabla u| \in L^{\infty}(M)$.
\end{lem}
\begin{proof}
Let $u$ be a $W^{1, 2}$-eigenfunction of $\Delta$ with eigenvalue $\lambda$. Then $u \in C^{\infty}(\Omega)$. For simplicity of notation, we denote $v := |\nabla u|$. By (\ref{eqn-Bochner-inequality-regular-part-eigenfunction}), Kato's inequality  $|\nabla |\nabla u|| \leq |\Hess\, u|$, 
one can show that there exists a positive constant $c$ such that 
\begin{equation}\label{eqn-eigenfunction-gradient-inequality}
    -\Delta v \leq c v \ \ \text{in the barrier sense on} \ \ \Omega.
\end{equation}
Indeed, 
let $v_\epsilon= \sqrt{|\nabla u|^2 + \epsilon^2} -\epsilon$. 
By the Kato's inequality, one has
\begin{equation}\label{eqn-v-epsilon-Kato}
    |\nabla v_\epsilon|^2 = \frac{|\nabla u|^2 \cdot |\nabla |\nabla u||^2}{|\nabla u|^2 + \epsilon^2} \leq 
    \frac{|\nabla u|^2 \cdot |\Hess\, u|^2}{|\nabla u|^2 + \epsilon^2} 
   \leq |\Hess\, u|^2. 
\end{equation}
Moreover, putting $c:= \max \{\lambda-K, 1\}>0$, and using (\ref{eqn-Bochner-inequality-regular-part-eigenfunction}) and (\ref{eqn-v-epsilon-Kato}), one has
\begin{eqnarray*}
    \frac{1}{2} \Delta (v_\epsilon+\epsilon)^2 = \frac{1}{2} \Delta |\nabla u|^2
    & \geq & |\Hess\, u|^2 - (\lambda -K) |\nabla u|^2 \\
    & \geq & |\Hess\, u|^2 - c ((v_\epsilon+\epsilon)^2 - \epsilon^2) \\
    & \geq & |\nabla v_\epsilon|^2 - c (v_\epsilon+\epsilon)^2 \ \ \text{ on} \ \ \Omega.
\end{eqnarray*}
Thus, 
$$ (v_\epsilon+\epsilon) \Delta (v_\epsilon+\epsilon) + |\nabla v_\epsilon|^2 \geq  |\nabla v_\epsilon|^2 - c (v_\epsilon+\epsilon)^2.
$$
Consequently, $\Delta v_\epsilon \geq -c v_\epsilon -c\epsilon$ for any $\epsilon>0$. 

If $x\in \Omega$ and $v(x)\not=0$, then $v$ is smooth at $x$. The calculation above with $\epsilon=0$ shows that $\Delta v \geq -cv$. 

If $v(x_0)=0$, then $v_\epsilon(x_0)=0=v(x_0)$ and $v_\epsilon \leq v$. 
Hence, $\Delta v \geq -cv$ in the barrier sense at $x_0$. 

We now claim that the inequality in (\ref{eqn-eigenfunction-gradient-inequality}) holds in the weak sense on the whole manifold $M$. Then, with the help of the Sobolev inequality in Lemma \ref{lem-L2-strong-compactness},  a Moser iteration argument implies $v = |\nabla u| \in L^\infty(M)$. 

In the rest of the proof, we show that the inequality in (\ref{eqn-eigenfunction-gradient-inequality}) holds in the weak sense on $M$, that is, for any $\varphi \in W^{1, 2}(M)$, $\varphi \geq 0$, 
\begin{equation}\label{eqn-eigenfunction-gradient-weak-inequality}
    \int_M \langle \nabla v, \nabla \varphi \rangle d\mu_g \leq c \int_M v \varphi d\mu_g.
\end{equation}
Because $C^\infty(M)$ is dense in $W^{1, 2}(M)$, we only need to establish the inequality (\ref{eqn-eigenfunction-gradient-weak-inequality}) for $\varphi \in C^\infty(M)$, $\varphi \geq 0$. For such $\varphi$ and $\psi_\delta$ in Lemma \ref{lem-zero-capacity-property}, we multiply the inequality in (\ref{eqn-eigenfunction-gradient-inequality}) by $\psi_\delta \varphi$, and integrate over $M$. Then by integrating by parts and rearranging the inequality, we obtain
\begin{equation}\label{eqn-eigenfunction-gradient-weak-inequality-2}
    \int_M \psi_\delta \langle \nabla v, \nabla \varphi \rangle d\mu_g
    \leq
     c \int_{M} \psi_\delta \varphi v d\mu_g - \int_{M} \varphi \langle \nabla v, \nabla \psi_\delta \rangle d\mu_g.
\end{equation}
For the second term on the right hand side, because the smooth function $\varphi$ on the compact manifold $M$ is bounded, we have
\begin{equation*}
    \left| \int_M \varphi \langle \nabla v, \nabla \psi_\delta \rangle d\mu_g \right|
    \leq 
    C \left| \int_M \langle \nabla v, \nabla \psi_\delta \rangle d\mu_g \right|.
\end{equation*}
Then by applying Cauchy-Schwarz inequality and the Kato's inequality, we obtain
\begin{equation*}
    \left| \int_M \varphi \langle \nabla v, \nabla \psi_\delta \rangle d\mu_g \right|
    \leq
    C \|\nabla v\|_{L^2(M)} \|\nabla \psi_\delta\|_{L^2(M)} 
    \leq C \||\Hess\, u|\|_{L^2(M)} \|\nabla \psi_\delta\|_{L^2(M)} \to 0,
\end{equation*}
as $\delta \to 0$, since $\||\Hess\, u|\|_{L^2(M)}$ is finite by Lemma \ref{lem-eigenfunction-Hess-L2} and $\|\psi_\delta\|_{L^2(M)} \to 0$ by Lemma \ref{lem-zero-capacity-property}. Thus, by letting $\delta \to 0$ in (\ref{eqn-eigenfunction-gradient-weak-inequality-2}), we prove the inequality (\ref{eqn-eigenfunction-gradient-weak-inequality}) for nonnegative smooth function $\varphi$, so for nonnegative test functions $\varphi \in W^{1, 2}(M)$.
\end{proof}


\subsection{Proof of Theorems \ref{thm-smooth-extension}, \ref{thm-RCD} and \ref{thm-removable}} \label{subsect-proof-main-results}

With our work from the previous subsection, \thmref{thm-RCD} 
follows from \cite[Theorem 3.7]{Honda-AGMS-18},  
see also \cite{BKMR-AIF-21, Jiang-Sheng-Zhang2022
}. For completeness, we give a detailed proof here.
\begin{proof}[Proof of \thmref{thm-RCD}]
It suffices to check four conditions in \defref{defn-RCD}. Condition (1) follows from the compactness of $M$ and the uniform Euclidean property of $g$. Condition (2) is also clear. Condition (3) follows from \lemref{lem-Sobolev-to-Lipschitz}. 

It remains to check the BE($K, n$) condition in (4). Let $\Omega:= M \setminus S$, $\{u_i\}^{+\infty}_{i=1}$ be the orthonormal basis of $L^2(M)$ consisting of eigenfunctions of $\Delta$. For any $f \in W^{1, 2}(M)$ and $N \in \mathbb{N}$, let $f_N := \sum^{N}_{i=1} a_i u_i$, where $a_i := \int_{M} f u_i d\mu_g$. As before, $k=\dim S$,  and by assumption $n-k \geq 4$.

Bochner formula and $\Ric_g \geq K$ on $\Omega$ imply
\begin{equation}\label{eqn-Bochner-inequality-regular-part}
\frac{1}{2}\Delta |\nabla f_N|^2 \geq |\Hess f_N|^2 + \langle \nabla \Delta f_N, \nabla f_N \rangle +K |\nabla f_N|^2 \quad \text{on} \ \ \Omega.
\end{equation}

By Lemma \ref{lem-eigenfunction-Hess-L2} and Lemma \ref{lem-eigenfunction-gradient-L-infinty}, for any eigenfunction $u_i$, we have
\begin{equation*}
    \int_M |\nabla |\nabla u_i|^2|^2 d\mu_g
    = 4 \int_M |\nabla u_i|^2 |\nabla |\nabla u_i||^2 d\mu_g
    \leq 
    4 \int_M |\nabla u_i|^2 |\Hess\, u_i|^2 d\mu_g < +\infty.
\end{equation*}
Thus, $|\nabla u_i|^2 \in W^{1 ,2}(M)$ for all $i \in \mathbb{N}$, so $|\nabla f_N|^2 \in W^{1 ,2}(M)$. 
Also $|\Hess f_N| \in L^2(M)$ from Lemma \ref{lem-eigenfunction-Hess-L2}.


For any test function $\varphi \in L^\infty(M) \cap D(\Delta)$ with $\varphi \geq 0$ and $\Delta \varphi \in L^\infty(M)$, by multiplying the inequality \eqref{eqn-Bochner-inequality-regular-part} by $\varphi \psi_\delta$, and integration by parts, we have
\begin{eqnarray}\label{eqn-integral-Bochner-inequality-3}
    & & -\frac{1}{2}\int_M \langle \nabla(\varphi \psi_\delta), \nabla |\nabla f_N|^2 \rangle d\mu_g \nonumber \\
    & & \geq 
    \int_M \varphi \psi_\delta \left( |\Hess f_N|^2 + \langle \nabla \Delta f_N, \nabla f_N \rangle + K |\nabla f_N|^2 \right) d\mu_g.
\end{eqnarray}
By Lemma \ref{lem-zero-capacity-property}, $\psi_\delta \to 1$ in $L^2(M)$ and $\nabla \psi_\delta \rightharpoonup 0$ in $L^2(M)$ as $\delta \to 0$, and since $|\nabla f_N|^2 \in W^{1, 2}(M)$, we have
\begin{equation*}
    -\frac{1}{2}\int_M \langle \nabla(\varphi \psi_\delta), \nabla |\nabla f_N|^2 \rangle
    \to 
    -\frac{1}{2} \int_M \langle \nabla \varphi, \nabla |\nabla f_N|^2 \rangle d\mu_g
    =
    \frac{1}{2} \int_M |\nabla f_N|^2 \Delta \varphi d\mu_g,
\end{equation*}
as $\delta \to 0$.
Moreover, by the dominated convergence theorem, we have
\begin{eqnarray*}
    & & \int_M \varphi \psi_\delta \left( |\Hess f_N|^2 d\mu_g + \langle \nabla \Delta f_N, \nabla f_N \rangle + K |\nabla f_N|^2  \right) d\mu_g \\
    & \to &
    \int_M \varphi \left( |\Hess f_N|^2 + \langle \nabla \Delta f_N, \nabla f_N \rangle + K |\nabla f_N|^2 \right) d\mu_g.
\end{eqnarray*}
Thus, by letting $\delta \to 0$ in \eqref{eqn-integral-Bochner-inequality-3}, we have
\begin{eqnarray*}
   \frac{1}{2}\int_M |\nabla f_N|^2 \Delta \varphi d\mu_g
    & \geq  &
    \int_M \varphi \left( |\Hess f_N|^2 + \langle \nabla \Delta f_N, \nabla f_N \rangle + K|\nabla f_N|^2 \right) d\mu_g \\
    & \geq & 
    \int_{M} \varphi \left( \frac{(\Delta f_N)^2}{n} + \langle \nabla \Delta f_N, \nabla f_N \rangle + K|\nabla f_N|^2 \right) d\mu_g.
\end{eqnarray*}
Finally, letting $N \to \infty$, we obtain that 
\begin{equation*}
     \frac{1}{2}\int_M |\nabla f|^2 \Delta \varphi d\mu_g
       \geq 
    \int_{M} \varphi \left( \frac{(\Delta f)^2}{n} + \langle \nabla \Delta f, \nabla f \rangle + K |\nabla f|^2 \right) d\mu_g 
\end{equation*}
for any test function $\varphi \in L^\infty(M) \cap D(\Delta)$ with $\varphi \geq 0$ and $\Delta \varphi \in L^\infty(M)$.
This completes the proof.
\end{proof}

\thmref{thm-RCD} provides the crucial ingredient to the proof of \thmref{thm-smooth-extension}.
\begin{proof}[Proof of \thmref{thm-smooth-extension}]
First of all, we note that, under the assumptions for $M_0$ in \thmref{thm-smooth-extension}, there is no uniformly Euclidean metric $g$ on $M := T^n \# M_0$ smooth away from a closed embedded submanifold $S$ with co-dimension $\geq \frac{n}{2} +1$, which has nonnegative scalar curvature on the smooth part and positive scalar curvature at a point. This non-existence result can be established by using the conformal blow up technique dating back to Schoen's final resolution of Yamabe problem \cite{Schoen-Yamabe-problem}, which reduces the problem to the non-existence results of Chodosh-Li \cite{Chodosh-Li-Annals} and Wang-Zhang \cite{Wang-Zhang-22} for complete metrics.  This was recently used in studies of uniformly Euclidean metric with positive scalar curvature by Li-Mantoulidis \cite{Li-Mantoulidis}, Kazaras \cite{Kazaras-MAAN-19}, Wang-Xie \cite{WX2024} and others, and also in studies of positive mass theorem and positive scalar curvature with conical singularity by Dai-Sun-Wang \cite{DSW-PMT-nonspin, DSW-PSC-conical}. Since the method carries over in a similar way, we refer to these references for details. Roughly speaking, one uses the Green's function of the conformal Laplacian but with the scalar curvature suitably truncated to conformally blow up the singular metric. This produces a complete metric with positive scalar curvature on $(T^n \# M_0) \setminus S$, leading to a contradiction with non-existence results of Chodosh-Li \cite{Chodosh-Li-Annals} and Wang-Zhang \cite{Wang-Zhang-22}. We only emphasize a few things here. Firstly, for uniformly Euclidean metrics, the Green's function can be solved by Littman-Stampacchia-Weinberger \cite{LSW63} and Schoen-Yau \cite{SY-MM-79}. Also, the condition that the co-dmension of $S$ $\geq \frac{n}{2}+1$ guarantees the completeness of the blow up metric, see Proposition 2.4 in Wang-Xie \cite{WX2024}. And finally, the assumption on the location and size of the singular set, Condition 2), guarantees that $M\setminus S$ can again be written as $T^n \# M_1$ for suitable $M_1$, and hence the results of Chodosh-Li \cite{Chodosh-Li-Annals} and Wang-Zhang \cite{Wang-Zhang-22} can be applied. 

Then by Theorem B in Kazdan \cite{Kazdan-82}, the metric $g$ must be Ricci flat. Therefore, \thmref{thm-RCD} implies that $(T^n \# M_0, d_g, d\mu_g)$ is a ${\rm RCD}(0, n)$ space. Consequently, we conclude that $(T^n \# M_0, d_g)$ is isometric to a flat torus $T^n$ by Corollary 1.4 in Mondino-Wei \cite{Mondino-Wei-19}, and in particular, $g$ extends smoothly to $T^n \# M_0$ (possibly with a change of differential structure). This completes the proof.
\end{proof}

\begin{proof}[Proof of \thmref{thm-removable}]
By Theorem \ref{thm-RCD}, $(M, g)$ is ${\rm RCD}(0, n)$ and the splitting theorem of Gigli \cite{Gigli-splitting} applies. Since $\pi_1(M)$ is infinite, the universal cover $\tilde{M}$ splits as $\mathbb{R}^k \times X$ for some $1\leq k \leq n$ and $X$ compact simply connected metric measure space. Now if $\tilde{X}$ has singularity, it will give rise to non-isolated singularity for $M$. Since $(M, g)$ has only isolated singularity, $X$ must be smooth and our result follows, see \cite[Theorem 9.2]{CheegerGromoll72} and \cite{CheegerGromoll71} for more details. 
\end{proof}

We will end this section with several examples. 
\newline

{\bf Example 1}: This example is motivated by Cecchini-Frenck-Zeidler \cite{Cecchini2024}; see \cite{Cecchini2024} for related discussion. Let $M=T^n \# \Sigma$, where $\Sigma$ is an exotic sphere of dimension $n$. Consider $g$ a uniformly Euclidean metric on $M$ with an isolated singularity at a point in $\Sigma$ and nonnegative scalar curvature on the smooth part.
Then the metric measure space $(M, g)$ induced by $g$ does not remember the original smooth structure, and hence it is isometric to the flat torus $T^n$, which can be thought as the connected sum of the torus with the standard sphere. 
\newline

{\bf Example 2}: Let $M=T^n$ and $T^{k}\subset M$ a linear sub-torus where $n-k \geq 4$. If $g$ is a uniformly Euclidean metric on $M$ with singular set $S=T^{k}$ and nonnegative scalar curvature on the smooth part. Then by Gromov-Lawson \cite{Gromov-Lawson1980-simply}, $M\setminus S$ is $\Lambda^2$-enlargeable, and hence cannot carry complete metrics of positive scalar metric. It follows by the conformal blowup method as in the proof of Theorem \ref{thm-smooth-extension}, $g$ must be Ricci flat on the smooth part. Then by Theorem \ref{thm-removable}, $g$ extends smoothly over $S$. This example illustrates the situation where our result still applies even if the singular set is not contained in a topological ball. See \cite[Example 6.9]{Gromov-Lawson1980-simply} for more examples.
\newline

{\bf Example 3}: Consider a closed Schoen-Yau-Schick (SYS) manifold $M$ of dimension $4 \leq n \leq 7$ (here we use the definition given by Shi-Wang-Wu-Zhu \cite{SWWZ2024}). By the very recent work of Shi-Wang-Wu-Zhu \cite[Corollary 1.9]{SWWZ2024}, 
a uniformly Euclidean metric with isolated singularity on $M$ and nonnegative scalar curvature on the smooth part must be Ricci flat on the smooth part. On the other hand, 
a SYS manifold has infinite fundamental group. By Theorem \ref{thm-removable}, the metric extends smoothly, in other words, isolated singularity is removable.
Similarly, by Cecchini-Frenck-Zeidler \cite[Theorem C and Remark 1.9]{Cecchini2024}, a uniformly Euclidean metric with isolated singularity on a closed enlargeable manifold must be Ricci flat on the smooth part. Then because a closed enlargeable manifold has infinite fundamental group, again the singularity is removable.


\appendix

\section{Proof of \thmref{gradi}}\label{sect-appendix}

 The main ingredients are   Bochner's formula, the maximal principle, cut-off functions and the Laplacian comparison.  
  \begin{proof}[Proof of Theorem~\ref{gradi}]: Let $h = \log u$. Then $\nabla h =
\frac{\nabla u}{u}, \Delta h = \frac{\Delta u}{u} - \left|
\frac{\nabla u}{u} \right|^2 = \frac{K(u)}{u} - |\nabla h|^2$. By the Bochner
formula we have 
\ba \frac 12
\Delta |\nabla h|^2 & = & |\Hess \, h|^2 + \lp \nabla h, \nabla
(\Delta h) \rp
+ \Ric (\nabla h, \nabla h)   \nonumber \\
& \ge &  |\Hess \, h|^2  + \lp \nabla h,   \nabla \tfrac{K(u)}{u} \rp- \lp \nabla h, \nabla (|\nabla h|^2) \rp
-(n-1) H^2 |\nabla h|^2.  \label{bochner-h} \ea 
For the Hessian term
one could use the Schwarz inequality \be  \label{schwarz}
|\Hess \, h|^2 \ge
\frac{(\Delta h)^2}{n}.
\ee  Indeed, when $H = 0$, this estimate is enough. For $H>0$, using this estimate one would get $(n(n-1))^{1/2}$ instead of $n-1$ for the coefficient of $H$ in (\ref{gradient}). To get the best constant for $H >  0$, note that (\ref{schwarz}) is only optimal when Hessian at all
directions are same. Harmonic functions in the model spaces are
radial functions, so their Hessian along the radial direction would
be different from the spherical directions. Therefore one computes the norm by separating the radial
direction and uses Schwarz inequality in the spherical directions. Let $\{e_i\}$ be an orthonormal basis with $e_1 =
\frac{\nabla h}{|\nabla h|}$ (we only need to prove the case $|\nabla h| \not =0$), the potential radial direction, denote
$h_{ij} = \Hess \, h (e_i, e_j)$. We compute, 
\begin{eqnarray*}
|\Hess \, h|^2 & = &   h_{11}^2 + 2\sum_{j=2}^n h_{1j}^2 + \sum_{i,j\ge 2} h_{ij}^2 \nonumber \\  
& \ge & h_{11}^2 + 2\sum_{j=2}^n h_{1j}^2 + \frac{(\Delta h -h_{11})^2} {n-1}  \nonumber \\
& = & h_{11}^2 + 2\sum_{j=2}^n h_{1j}^2 + \frac{(|\nabla h|^2 +h_{11} -\tfrac{K(u)}{u})^2} {n-1} \label{eqn-Laplace-gradient}  \\
& \ge &  \frac{n}{n-1} \sum_{j=1}^n h_{1j}^2 + \frac{1}{n-1} |\nabla h|^4  + \frac{2}{n-1} h_{11} |\nabla h|^2  - \frac{2}{n-1} \tfrac{K(u)}{u} \left(h_{11} + |\nabla h|^2\right) . \nonumber
\end{eqnarray*}
Now  $h_{1j} = \frac{1}{ |\nabla h|} \lp \nabla_{e_j} \nabla h, \nabla h \rp =
\frac{1}{2 |\nabla h|} e_j ( | \nabla h|^2 )$ and $h_{11} =\frac{1}{2 |\nabla h|^2} \lp \nabla | \nabla h|^2, \nabla h \rp$. Therefore
\be  \label{Hess}
|\Hess \, h|^2  \ge    \frac{n}{4(n-1)} \frac{|\nabla | \nabla h|^2|^2}{ | \nabla h|^2} + \frac{|\nabla h|^4  + \lp \nabla | \nabla h|^2, \nabla h \rp}{n-1} - \frac{2}{n-1} \tfrac{K(u)}{u} \left(h_{11} + |\nabla h|^2\right). 
\ee
Note that when $K(u) =0$, i.e. $u$ is harmonic, then 
 equality holds if and only if $\Hess\, h$ are same on the level set of $h$, i.e.
\[
\Hess\, h = - \frac{| \nabla h|^2}{n-1} \left( g - \frac{1}{| \nabla h|^2} dh \otimes dh \right).
\]
By plugging (\ref{Hess}) into (\ref{bochner-h}) 
we get
\ba
\frac 12 \Delta
|\nabla h|^2  & \ge &   \frac{n}{4(n-1)} \frac{|\nabla | \nabla h|^2|^2}{ | \nabla h|^2} + \frac{|\nabla h|^4}{n-1}-  \frac{n-2 + \tfrac{K(u)}{u  |\nabla h|^2}}{n-1} \lp \nabla | \nabla h|^2, \nabla h \rp   \nonumber \\  && +  (K' - \tfrac{n+1}{n-1} \tfrac{K}{u}) | \nabla h|^2   -(n-1) H^2 |\nabla
h|^2.  \label{Delta-h}
\ea 
If $|\nabla h|^2$ achieves a local maximum inside $B(q,2R)$
then we are done. Assume $|\nabla h|(q_0)$ is the maximum for some
$q_0 \in B(q, 2R)$, then $\nabla |\nabla h|^2 (q_0) = 0,  \Delta
|\nabla h|^2 (q_0) \le 0$. Plug these into (\ref{Delta-h}) gives\[
0 \ge \frac{|\nabla h|^4}{n-1} +  (K' - \tfrac{n+1}{n-1} \tfrac{K}{u}) | \nabla h|^2  -(n-1) H^2 |\nabla h|^2.\] Hence
$|\nabla h|^2 \le (n-1)^2 H^2 + (n-1) \sup_{B(q,2R)} \left|K'- \tfrac{n+1}{n-1} \tfrac{K}{u}\right|$.

In general the maximum could occur at the boundary and one has to use a cut-off function to force the maximum is achieved in the interior. Let $f: [0,2R] \ra [0,1]$ be a smooth function with \ba
f|_{[0,R]} \equiv 1, \ \mbox{supp}f \subset [0, 2R),  \\
-cR^{-1} f^{1/2} \le f' \le 0,   \label{f'} \\
|f''| \le cR^{-2},  \label{f''} \ea where $c>0$ is a universal
constant. Let $\phi: \overline{B(q,2R)} \ra  [0,1]$ with $\phi(x)
= f (r(x))$, where $r(x)= d(x,q)$ is the distance function. Set $G
= \phi |\nabla h|^2$. Then $G$ is nonnegative on $M$ and has
compact support in $B(q,2R)$.  Therefore it achieves its maximum
at some point $q_0 \in B(q, 2R)$. We can assume $r(q_0) \in[R,
2R)$ since if $q_0 \in B(q, R)$, then $|\nabla h|^2$ achieves
maximal at $q_0$ on $B(q,R)$ and previous argument applies.

If $q_0$ is not a cut point of $q$ then $\phi$ is smooth at $q_0$
and we have \be \Delta G(q_0) \le 0, \ \ \ \nabla G (q_0) = 0.
\label{max-q0} \ee
At the smooth point of $r$, $\nabla G = \nabla \phi |\nabla h|^2 + \phi \nabla |\nabla h|^2$, $$\Delta G =\Delta  \phi |\nabla h|^2+ 2 \lp \nabla \phi, \nabla |\nabla h|^2 \rp +  \phi \Delta |\nabla h|^2.$$
Using (\ref{Delta-h}), (\ref{max-q0}) and express $|\nabla h|^2, \nabla |\nabla h|^2$ in terms of $G, \nabla G$ we get, At the maximal point $q_0$ of $G$,

\ba  0\ge  \Delta G (q_0)  &  \ge &  \frac{\Delta \phi}{\phi} G  -2  \frac{|\nabla \phi|^2}{\phi^2}G + \frac{n}{2(n-1)} \frac{|\nabla \phi|^2}{\phi^2}G  + \frac{2}{n-1} \frac{G^2}{\phi} \nonumber  \\
& &+ \frac{2(n-2 + \tfrac{K(u)}{u|\nabla h|^2})}{n-1} \lp \nabla h, \nabla \phi \rp
\frac{G}{\phi} +  2(K' - \tfrac{n+1}{n-1} \tfrac{K}{u}) G -2(n-1)H^2 G. \label{G-ineq} \ea
Since $\phi (x)  = f(r(x))$ is a radial function, $|\nabla \phi|
= |f'|$,  by (\ref{f'}),
\[
\lp \nabla h, \nabla \phi \rp \ge - |\nabla h| |\nabla \phi| =
-G^{\frac 12} \frac{|\nabla \phi|}{\phi^{1/2}} \ge - G^{\frac 12}
\frac{c}{R}.\] Also $\Delta \phi = f' \Delta r + f''$. Since $f'
\le 0$, by the Laplacian comparison theorem  and
(\ref{f''})
$$ \Delta \phi \ge f' \Delta_H r - cR^{-2},$$ where $\Delta_H r =
(n-1) H \coth (Hr)$, which is  $\le (n-1) H \coth (HR)$ on $[R,
2R]$. Hence
\ban
\Delta \phi & \ge & - cR^{-1} \left( (n-1)H \coth (HR) + R^{-1}
\right) \\
& \ge & - cR^{-1}  \left[ (n-1)(2R^{-1} + 4H) + R^{-1} \right]
\ean
Multiply (\ref{G-ineq}) by $\frac{(n-1)\phi}{G}$ and
plug
these in, we get 
\ban 0 & \ge&  2 G - 2(n-2 + |\tfrac{K(u)}{u|\nabla h|^2}|) \frac{c}{R} G^{\frac
	12}  \\ & & - \frac{c}{R} \left(
\frac{(n-1)(2n-1)}{R} + 4(n-1)^2 H +\left(
\frac{3n}{2}-2 \right)\frac{c}{R} \right) \\
& & - 2(n-1) \left| K' -\tfrac{n+1}{n-1} \tfrac{K}{u}\right| - 2(n-1)^2 H^2. 
\ean
Now if $|\nabla h|^2 \le \tfrac{|K(u)|}{u}$, we are done. So we can assume $-\tfrac{|K(u)|}{u|\nabla h|^2} \ge -1$. In this case we have 
\ban 
0 & \ge &  2 G - 2(n-1) \frac{c}{R} G^{\frac
	12}   - \frac{c}{R} \left(
\frac{(n-1)(2n-1)}{R} + 4(n-1)^2 H +\left(
\frac{3n}{2}-2 \right) \frac{c}{R} \right) \\
& & - 2(n-1)  \left| K' -\tfrac{n+1}{n-1} \tfrac{K}{u}\right| - 2(n-1)^2 H^2. 
\ean
Solving this
quadratic inequality gives  
\begin{equation*} 
(G(q_0))^{\frac 12} \le (n-1)H +
C_1(n) R^{-1} + C_2(n,H) R^{-1/2} + \left((n-1) \left| K' -\tfrac{n+1}{n-1} \tfrac{K}{u}\right|\right)^{1/2} .  
\end{equation*}
Therefore
\ban
& & \sup_{B(q,R)}|\nabla h| 
 =  \sup_{B(q, R)} G^{1/2}  
 \le   \sup_{B(q, 2R)} G^{1/2} \\
& & \quad \le (n-1) H  + C_1(n) R^{-1} + C_2(n,H) R^{-1/2} +\left[(n-1)  \left| K' -\tfrac{n+1}{n-1} \tfrac{K}{u}\right|\right]^{1/2}.
\ean
If $q_0$ is in the cut locus of $q$, use the upper barrier $r_{q_0, \epsilon}(x)$ for $r(x)$ and let $\epsilon \ra 0$ gives the same estimate.
\end{proof}

\bibliographystyle{plain}
\bibliography{DSW_PSC.bib}

\end{document}